\documentclass[a4paper,reqno]{amsart} 

\usepackage{amssymb}
\usepackage[mathcal]{euscript} 








\usepackage{tikz-cd} 

\usepackage{hyperref}




\newcommand{\bydef}{:=}

\newcommand{\id}{\mathrm{id}}


\newcommand{\trace}{\mathrm{tr}}




\newcommand{\cA}{\mathcal{A}}
\newcommand{\cB}{\mathcal{B}} 
\newcommand{\cC}{\mathcal{C}}
\newcommand{\cD}{\mathcal{D}}

\newcommand{\cI}{\mathcal{I}}
\newcommand{\cJ}{\mathcal{J}}

\newcommand{\cT}{\mathcal{T}}

\newcommand{\frg}{{\mathfrak g}}



\newcommand{\ZZ}{\mathbb{Z}}

\newcommand{\HH}{\mathbb{H}} 
\newcommand{\OO}{\mathbb{O}}
\newcommand{\FF}{\mathbb{F}} 



\DeclareMathOperator{\Hom}{\mathrm{Hom}}
\DeclareMathOperator{\End}{\mathrm{End}}

\DeclareMathOperator{\im}{\mathrm{im}\,}
\DeclareMathOperator{\Aut}{\mathrm{Aut}}




\DeclareMathOperator{\Der}{\mathrm{Der}}


\DeclareMathOperator{\Mat}{\mathrm{Mat}}






\newcommand{\Ad}{\mathrm{Ad}}

\newcommand{\frso}{{\mathfrak{so}}}

\newcommand{\frosp}{{\mathfrak{osp}}}

\newcommand{\tri}{\mathfrak{tri}}

\newcommand{\frel}{\mathfrak{el}}


\newcommand{\Spin}{\mathrm{Spin}}










\providecommand{\espan}[1]{\text{span}\left\{ #1\right\}}



\newcommand{\subo}{_{\bar 0}} 
\newcommand{\subuno}{_{\bar 1}}
\newcommand{\alb}{\mathbb{A}} 


\newcommand{\Cl}{\mathfrak{Cl}} 


\newcommand{\nup}{\textup{n}}





\newcommand{\Repe}{\mathsf{Rep\,C}_3}
\newcommand{\Repp}{\mathsf{Rep\,C}_p}
\newcommand{\Repu}{\mathsf{Rep\,C}_5}
\newcommand{\vect}{\mathsf{Vec}}
\newcommand{\sVec}{\mathsf{sVec}}
\newcommand{\balpha}{\boldsymbol{\alpha}}
\newcommand{\Repa}{\mathsf{Rep}\,\balpha_3}
\newcommand{\Repau}{\mathsf{Rep}\,\balpha_5}
\newcommand{\Repap}{\mathsf{Rep}\,\balpha_p}
\newcommand{\Ver}{\mathsf{Ver}}

\newtheorem{theorem}{Theorem}[section]
\newtheorem{proposition}[theorem]{Proposition}
\newtheorem{lemma}[theorem]{Lemma}
\newtheorem{corollary}[theorem]{Corollary}

\newtheorem{recipe}[theorem]{Recipe}

\theoremstyle{definition}

\theoremstyle{remark} \newtheorem{remark}[theorem]{Remark}
\numberwithin{equation}{section}

\begin{document}

\title[From the Albert algebra to Kac's Jordan superalgebra]%
{From the Albert algebra to Kac's ten-dimensional Jordan superalgebra via tensor categories
in characteristic $5$}


\author[A.~Elduque]{Alberto Elduque} 
\address{Departamento de
Matem\'{a}ticas e Instituto Universitario de Matem\'aticas y
Aplicaciones, Universidad de Zaragoza, 50009 Zaragoza, Spain}
\email{elduque@unizar.es} 
\thanks{A.E. has been supported by grant
MTM2017-83506-C2-1-P (AEI/FEDER, UE) and by grant 
E22\_20R (Gobierno de Arag\'on).}

\author[P.~Etingof]{Pavel Etingof}
\address{Department of Mathematics, Massachusetts Institute of Technology, 
Cambridge, MA 02139}
\email{etingof@math.mit.edu} 
\thanks{P.E.'s work was partially supported by the NSF grant DMS - 1916120}

\author[A.S.~Kannan]{Arun S.~Kannan}
\address{Department of Mathematics, Massachusetts Institute of Technology, 
Cambridge, MA 02139}
\email{akannan@mit.edu} 

\subjclass[2020]{Primary 17C40; Secondary 17C70; 17B25; 18M15}

\keywords{Albert algebra; Kac's superalgebra; Verlinde category; semisimplification; Tits construction}


\begin{abstract}
Kac's ten-dimensional simple Jordan superalgebra over a field of characteristic $5$ is obtained
from a process of semisimplification, via tensor categories, from the exceptional simple Jordan 
algebra (or Albert algebra), together with a suitable order $5$ automorphism.

This explains McCrimmon's `bizarre result' asserting that, in characteristic $5$, Kac's superalgebra is
a sort of `degree $3$ Jordan superalgebra'.

As an outcome, the exceptional simple Lie superalgebra $\frel(5;5)$, specific of characteristic $5$,
is obtained from the simple Lie algebra of type $E_8$ and an order $5$ automorphism.

In the process, precise recipes to obtain superalgebras from algebras in 
$\Repp$ (or $\Repap$), $p>2$, are given.
\end{abstract}

\maketitle

\section{Introduction}\label{se:Intro}

In \cite{McC05}, Kevin McCrimmon considered the Grassmann evelope of Kac's ten-dimensional
simple Jordan superalgebra $K_{10}$ and obtained, in his own words, \emph{the bizarre
result that in characteristic $5$ (but not otherwise), it is the Jordan algebra over a shaped cubic form 
over $\Gamma_0$}. This means that $K_{10}$ satisfies the super version of the Cayley-Hamilton
equation of degree $3$.

A hint that this could be the case appears in \cite{EO00}, where it is proved that only in
characteristic $5$ there exists an exceptional nine-dimensional \emph{pseudo-composition algebra}, and
this algebra is strongly related to Kac's superalgebra.

McCrimmon's result implies, in characteristic $5$, that $K_{10}$ can be plugged, together
with the algebra of octonions, into a well-known construction by Tits (see \cite{Tits66,Eld11}), 
thus obtaining a Lie superalgebra $\cT(\OO,K_{10})$. This was done in \cite{Eld07}, describing in this way
a new simple Lie superalgebra, denoted by $\frel(5;5)$ in \cite{BGL09}, specific of
characteristic $5$. Its even part is the orthogonal Lie algebra $\frso_{11}$, and the odd part
is the spin module for the even part.

\smallskip

On the other hand, quite recently, the third author \cite{Kan24} obtained this exceptional
Lie superalgebra $\frel(5;5)$, as well as the other known finite-dimensional exceptional
Lie superalgebras with an integer Cartan matrix in characteristic $3$ and $5$, via tensor categories.

In characteristic $3$ one starts with a Lie algebra $\frg$ endowed with a nilpotent derivation $d$
with $d^3=0$. This allows us to see $\frg$ as a Lie algebra in the symmetric tensor category
$\Repa$ of representations of the affine group scheme $\balpha_3:R\mapsto \{a\in R\mid a^3=0\}$.
This category is not semisimple and its semisimplification is the Verlinde category $\Ver_3$, which is 
equivalent to the category of finite-dimensional  vector superspaces $\sVec$. The conclusion is that
$\frg$ \emph{semisimplifies} to a Lie superalgebra.

In characteristic $5$ one starts with a Lie algebra $\frg$ endowed with a nilpotent derivation $d$
with $d^5=0$, so that $\frg$ becomes a Lie algebra in $\Repau$. Its semisimplification gives a 
Lie algebra in the Verlinde category $\Ver_5$. The image of the full tensor subcategory generated by the indecomposable
modules (for the action of $d$) of dimension $1$, $4$ and $5$ under the semisimplification functor 
is equivalent to $\sVec$, as symmetric tensor categories, and again
one obtains Lie superalgebras from suitable Lie algebras. In particular, $\frel(5;5)$ is
obtained in \cite{Kan24} from the exceptional simple Lie algebra of type $E_8$.

\smallskip

The Verlinde category $\Ver_p$ appears too as the semisimplification of the category $\Repp$ of 
finite-dimensional representations of the cyclic group of order $p$ in characteristic $p$. Therefore,
starting with a Lie algebra $\frg$, or any nonassociative algebra in general, endowed with an
automorphism $\sigma$ of order $p$, the algebra $\frg$ becomes an algebra in $\Repp$ and, by
semisimplification, one gets an algebra in $\Ver_p$. 
 
\smallskip

In Section \ref{se:Repu} precise recipes will be given to get a superalgebra from any algebra over
a field of characteristic $p$, endowed with an automorphism of order $p$. Sections \ref{se:Albert}
and \ref{se:Spin} will review the Albert algebra $\alb$, which is the exceptional central simple 
Jordan algebra, and the way that the group $\Spin(8)$ acts on $\alb$ by automorphisms. This will be
used in Section \ref{se:order5} to consider a specific order $5$ automorphism $\sigma$ on the Albert
algebra over a field of characteristic $5$. This automorphism decomposes $\alb$ as a direct sum
$\alb=6L_1\oplus 4L_4\oplus L_5$, where $L_i$ denotes the indecomposable module (for the
action of $\sigma$) of dimension $i$. Hence, by semisimplification, we obtain an algebra in the full
tensor subcategory $\sVec$ of $\Ver_5$. Then, Theorem \ref{th:K10} shows that the corresponding
Jordan superalgebra is Kac's ten-dimensional superalgebra. This explains McCrimmon's ``bizarre
result'' mentioned above (Remark \ref{re:bizarre}). 

Section \ref{se:F4osp} will show that, under the induced action of $\sigma$ on the Lie algebra of
derivations $\Der(\alb)$ (which is the simple Lie algebra of type $F_4$), this Lie algebra semisimplifies
to the Lie superalgebra of derivations of Kac's superalgebra. It must be remarked that the semisimplification of the Lie algebra of derivations of an algebra is not, in general, the Lie superalgebra of derivations of the semisimplification of the algebra (see examples in \cite[Section 4.2]{DES}).

Finally, in Section \ref{se:E8el55} it will be checked that our previous automorphism $\sigma$ on 
the Albert algebra $\alb$ induces naturally an order $5$ automorphism on the simple Lie algebra of 
type $E_8$, obtained by means of Tits construction $\cT(\OO,\alb)$. Its semisimplification
gives the exceptional simple Lie superalgebra $\frel(5;5)$ in a way different from the one
in \cite{Kan24}.

\smallskip

Throughout the paper, \emph{the ground field $\FF$ will be assumed to be algebraically closed and of characteristic $p>2$}. From Section \ref{se:order5} on, this characteristic will be $5$. Unadorned tensor products will always be considered over $\FF$.

\bigskip

\section{From algebras in \texorpdfstring{$\Repp$}{RepCp} to superalgebras}\label{se:Repu}

Let $\mathsf{C}_p$ be the cyclic group of order $p$, and fix a generator $\sigma$. Let $L_i$ be the 
indecomposable module for $\mathsf{C}_p$ of dimension $i$, for $i=1,\ldots,p$. The one-dimensional
(trivial) module $L_1$ will be identified with the ground field $\FF$. In particular, $L_{p-1}$ has a basis
$\{v_0,v_1,\ldots,v_{p-2}\}$ with $\sigma(v_i)=v_i+v_{i+1}$ for all $i=0,\ldots,p-2$, with $v_{p-1}=0$. 
We will consider the element $\delta=\sigma -1$ in the group algebra $\FF \mathsf{C}_p$. Then $\delta$
acts on $L_{p-1}$ by $\delta(v_i)=v_{i+1}$ for all $i=0,\ldots, p-2$.

In \cite{DES} a precise recipe is given to pass from an algebra in $\Repe$ to a superalgebra over fields 
of characteristic $3$. Here this recipe will be extended to an arbitrary characteristic $p>2$. (For $p=2$, the Verlinde category $\Ver_2$ is equivalent to the category of finite-dimensional vector spaces $\vect$.)

The procedure given in \cite{DES} will be followed step-by-step, adding the necessary changes.

\subsection{Semisimplification of \texorpdfstring{$\Repp$}{RepCp}}\label{ss:semiRep}

The category $\Repp$, whose objects are the finite-dimensional representations of the finite group 
$\mathsf{C}_p$ over $\FF$  or, equivalently, of the corresponding
constant group scheme, and whose morphisms are the equivariant homomorphisms, is a symmetric tensor category, with the usual tensor product of vector spaces and the 
braiding given by the usual swap: $X\otimes Y\rightarrow Y\otimes X$, 
$x\otimes y\mapsto y\otimes x$. 

The category $\Repp$ is not  semisimple.  
By the Krull-Schmidt Theorem, any object $\cA$ in $\Repp$ decomposes,
nonuniquely, as
\begin{equation}\label{eq:A1Ap}
\cA=\cA_1\oplus \cA_2\oplus\cdots\oplus \cA_p,
\end{equation}
where $\cA_i$ is a direct sum of copies of $L_i$, $i=1,2,\ldots,p$.

A homomorphism $f\in\Hom_{\Repp}(X,Y)$ is said to be \emph{negligible} if for all homomorphisms $g\in\Hom_{\Repp}(Y,X)$, $\trace(fg)=0$ holds. Denote by 
$\mathrm{N}(X,Y)$ the subspace of negligible homomorphisms in 
$\Hom_{\Repp}(X,Y)$.

\smallskip

Negligible homomorphisms form a \emph{tensor ideal} and this allows us to define the 
\emph{semisimplification} of $\Repp$, which is the Verlinde category $\Ver_p$, whose objects are the objects of $\Repp$, but whose morphisms are given by
\[
\Hom_{\Ver_p}(X,Y)\bydef \Hom_{\Repp}(X,Y)/\mathrm{N}(X,Y).
\]
This is again a symmetric tensor category, with the tensor product in $\Repp$, and the braiding induced by the one in $\Repp$. For convenience we will use the same notation for an object and for its image under semisimplification.

Denote by $[f]$ the class of $f\in\Hom_{\Repp}(X,Y)$ modulo 
$\mathrm{N}(X,Y)$. Note that the identity morphism in $\End_{\Ver_p}(X)$ is 
$[\id_{X}]$, where $\id_{X}$ denotes the identity morphism in $\Repp$ (the identity map). We have thus obtained the \emph{semisimplification} functor:
\begin{equation}\label{eq:S}
\begin{split}
S:\Repp&\longrightarrow \Ver_p\\
 X&\mapsto X \ \text{for objects,}\\
 f&\mapsto [f]\ \text{for morphisms.}
\end{split}
\end{equation}
The semisimplification functor $S$ is $\FF$-linear and braided monoidal (see \cite[Definitions 1.2.3 and 8.1.7]{EGNO}).

Some straightforward consequences of the definitions are recalled here (see, e.g., \cite{EtOs22}):

\begin{itemize}
\item $L_1,\ldots,L_{p-1}$ are simple objects in $\Ver_p$, while $L_p$ is isomorphic to $0$.

\item $\Ver_p$ is semisimple: any object is isomorphic to a direct sum of copies of $L_0,\ldots,L_{p-1}$.

\item $\End_{\Ver_p}(L_i)=\FF[\id_{L_i}]\neq 0$ for $i=1,\ldots,p-1$, $\End_{\Ver_p}(L_p)=0$, and
$\Hom_{\Ver_p}(L_i,L_j)=0$ for $1\leq i\neq j\leq p-1$.

\item $L_1\otimes L_i$ and $L_i\otimes L_1$ are isomorphic to $L_i$, for $i=1,\ldots,p$, both in 
$\Repp$ and in $\Ver_p$.

\item $L_{p-1}\otimes L_{p-1}$ is isomorphic to $L_1$ in $\Ver_p$.
\end{itemize}

An explicit description of an isomorphism $L_1\cong L_{p-1}\otimes L_{p-1}$ in $\Ver_p$ will be needed
later on.

\begin{lemma}\label{le:Lp-1Lp-1L1}
\begin{enumerate}
\item For any $0\leq i,j\leq p-2$ with $i+j\geq p-1$, the element $v_i\otimes v_j$ belongs to the image of the action of $\delta$ in $L_{p-1}\otimes L_{p-1}$.
\item There are scalars $\mu_{ij}\in \FF$, $0\leq i,j\leq p-2$, $i+j\geq p-1$, with
$\mu_{ij}=-\mu_{ji}$ for all $i,j$, such that the element
\begin{equation}\label{eq:w}
w=\sum_{i=0}^{p-2}(-1)^iv_{p-2-i}\otimes v_i\ +
   \sum_{\substack{0\leq i,j\leq p,\\ i+j\geq p-1}}\mu_{ij}v_i\otimes v_j
\end{equation}
is fixed by the action of $\mathsf{C}_p$.
\end{enumerate}
\end{lemma}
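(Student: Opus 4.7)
For part (1), the plan is a downward induction on $j$. Since $\sigma$ acts diagonally and $\delta=\sigma-1$, one has
\[
\delta(v_a\otimes v_b) \;=\; v_{a+1}\otimes v_b \;+\; v_a\otimes v_{b+1} \;+\; v_{a+1}\otimes v_{b+1},
\]
with the convention $v_{p-1}=0$. The base case $j=p-2$ is immediate from $\delta(v_{i-1}\otimes v_{p-2})=v_i\otimes v_{p-2}$. For $j<p-2$ I will use the rewriting
\[
v_i\otimes v_j \;=\; \delta(v_{i-1}\otimes v_j) \;-\; v_{i-1}\otimes v_{j+1} \;-\; v_i\otimes v_{j+1},
\]
legitimate because $i+j\geq p-1$ together with $j\leq p-2$ forces $i\geq 1$; the last two terms lie in $\im(\delta)$ by the inductive hypothesis applied at the higher level $j+1$.

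For part (2), I introduce the $\sigma$-stable filtration $V_k=\mathrm{span}\{v_i\otimes v_j:i+j\geq k\}$, which satisfies $\delta(V_k)\subset V_{k+1}$. A short reindexing $i\leftrightarrow p-2-i$ in $w_0$ produces the factor $(-1)^{p-2}=-1$, so the swap $\tau$ sends $w_0$ to $-w_0$ and $w_0\in\Lambda^2 L_{p-1}$. A direct computation of $\delta(w_0)$ then shows that the contributions of index sum $p-1$ cancel pairwise after a shift of summation index, leaving $\delta(w_0)\in V_p$, and this element is itself antisymmetric because $\tau$ commutes with $\delta$.

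The crux is to strengthen part (1) to: for $i+j\geq p$, $v_i\otimes v_j\in \delta(V_{p-1})$, i.e., the restriction $\delta\colon V_{p-1}\to V_p$ is surjective. I plan to prove this by exactly the same induction as in (1), now using that $i+j\geq p$ and $j\leq p-2$ force $i\geq 2$, so that the preimage $v_{i-1}\otimes v_j$ actually lies in $V_{p-1}$; the two correction terms have index sums at least $p$ and are handled by the inductive hypothesis. Given the surjectivity, any preimage $c_0\in V_{p-1}$ of $-\delta(w_0)$ can be antisymmetrized to $c=\tfrac{1}{2}(c_0-\tau(c_0))\in V_{p-1}$, still satisfying $\delta(c)=-\delta(w_0)$ (since $\delta(w_0)$ is itself antisymmetric), and the scalars $\mu_{ij}$ with $\mu_{ij}=-\mu_{ji}$ are read off as the coefficients of $c$, so that $w=w_0+c$ is $\sigma$-fixed. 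The main obstacle I anticipate is bookkeeping at the boundary $v_{p-1}=0$, where the three-term formula for $\delta$ collapses and one must verify separately that each case of the induction (both the base $j=p-2$ and the strengthened version ensuring preimages inside $V_{p-1}$) still goes through.
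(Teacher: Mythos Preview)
Your proof is correct and follows essentially the same strategy as the paper's: both use the filtration by total index-sum (the paper writes it as polynomial degree after identifying $L_{p-1}\otimes L_{p-1}$ with $\FF[s,t]/(s^{p-1},t^{p-1})$, where $\delta$ is multiplication by $s+t+st$), both prove part~(1) by a downward induction showing that high-degree pieces lie in $\delta$ of the next filtration step, and both handle part~(2) by checking that $\delta(w_0)$ lands in $V_p$, invoking the strengthened surjectivity $\delta(V_{p-1})\supseteq V_p$, and antisymmetrizing a preimage. The only differences are cosmetic: the paper inducts on the total degree $d$ rather than on the second index $j$, and its polynomial notation makes the vanishing $\delta(w_0)\in V_p$ a one-liner via $(s+t)\cdot\bigl(\sum (-1)^i s^{p-2-i}t^i\bigr)=0$, whereas you verify the same cancellation by an explicit reindexing.
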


\begin{proof}
As in \cite[\S 9]{Wehlau} we may identify $L_{p-1}$ with 
\[
\FF[S]/(S^{p-1})=
\FF[s]=\espan{1,s,\ldots,s^{p-2}},
\] 
where $s$ is the class of the variable $S$ modulo the ideal generated
by $S^{p-1}$. We have $s^{p-1}=0$. The identification takes $v_i$ to $s^i$ for $i=0,\ldots,p-2$. The action of $\sigma$ is then given by $\sigma(s^i)=s^i+s^{i+1}$ for all $i$, so it is given by multiplication by $1+s$. The action of $\delta=\sigma -1$ is given by multiplication by $s$. Then,
$L_{p-1}\otimes L_{p-1}$ can be identified with 
\[
\FF[S]/(S^{p-1})\otimes \FF[T]/(T^{p-1})
\simeq \FF[S,T]/(S^{p-1},T^{p-1})=\FF[s,t],
\] 
so that $v_i\otimes v_j$ corresponds to the monomial $s^it^j$. The action of $\sigma$ is given by multiplication by $(1+s)(1+t)$, and hence the action of
$\delta$ is given by multiplication by $s+t+st$. We have
\[
\cA=\FF[s,t]=\bigoplus_{d=0}^{2p-4}\cA_d,
\]
where $\cA_d=\espan{s^it^j\mid i+j=d}$ is the subspace of homogeneous polynomials of degree $d$ 
in $s$ and $t$. Write $\cA_{\geq d}=\bigoplus_{e\geq d}\cA_e$ and note that we have $\cA_e=0$ 
for $e> 2p-4$.

To prove the first assertion it is enough to show that for any $d\geq p-1$ we have
\begin{equation}\label{eq:Ad}
\cA_d\subseteq \delta\bigl(\cA_{\geq d-1}\bigr).
\end{equation}
Actually, as $\delta(s^{p-2}t^{p-3})=s^{p-2}t^{p-2}$, it follows that $\cA_{2p-4}$ is contained in
$\delta\bigl(\cA_{\geq 2p-5}\bigr)$. Now assume $d\geq p-1$ and 
$\cA_{d+1}\subseteq\delta\bigl(\cA_{\geq d}\bigr)$. We must check that 
$\cA_d\subseteq \delta\bigl(\cA_{\geq d-1}\bigr)$. But for $i=0,\ldots,p-2$, 
\[
\delta(s^it^{d-1-i})=(s+t+st)s^it^{d-1-i}=s^{i+1}t^{d-1-i}+s^it^{d-i}+s^{i+1}t^{d-i}.
\] 
Since
the last summand $s^{i+1}t^{d-i}$ is contained in 
$\cA_{d+1}\subseteq \delta\bigl(\cA_{\geq d}\bigr)$, we get 
\[
s^{i+1}t^{d-1-i}+s^it^{d-i}\in
\delta\bigl(\cA_{\geq d-1}\bigr).
\] 
But $s^0t^d=0$ as $d\geq p-1$, and this gives $s^1t^{d-1}\in\delta\bigl(\cA_{\geq d-1}\bigr)$,
which in turn gives $s^2t^{d-2}\in \delta\bigl(\cA_{\geq d-1}\bigr)$, and so on, thus proving \eqref{eq:Ad}.

\smallskip

Symmetric tensors in $L_{p-1}\otimes L_{p-1}$ correspond to symmetric polynomials in $\cA$:  $f(s,t)=f(t,s)$. Consider the skew-symmetric element
\[
a\bydef \sum_{i=0}^{p-2}(-1)^i s^{p-2-i}t^{i}=s^{p-2}-s^{p-3}t+-\cdots -t^{p-2}\in\cA_{p-2}.
\]
To prove the second assertion, we must check that there exists a skew-symmetric element $b\in\cA_{\geq p-1}$ such that
the skew-symmetric polynomial $w=a+b$ lies in $\ker\delta$.

To check this, note that $(s+t)a$ is trivial and hence we have $\delta(a)=(s+t+st)a=sta\in\cA_{\geq p}
\subseteq \delta\bigl(\cA_{\geq p-1}\bigr)$. Since $a$ is skew-symmetric and $\delta$ preserves this property, there is a skew-symmetric element $b\in\cA_{\geq p-1}$ such that $\delta(a)=-\delta(b)$, and hence $a+b$ lies in $\ker\delta$. 
\end{proof}

\begin{remark}\label{re:easyw}
A particular choice of the element $w$ in \eqref{eq:w} is given by the 
element in $L_{p-1}\otimes L_{p-1}$ that corresponds to the
following element in $\cA$:
\[
f(s,t)=\sum_{i=0}^{p-2}(-1)^i\Bigl(s(1+\tfrac{t}{2})\Bigr)^{p-2-i}
\Bigl(t(1+\tfrac{s}{2})\Bigr)^{i},
\]
because $s+t+st=s(1+\frac{t}{2})+t(1+\frac{s}{2})$, and hence
$(s+t+st)f(s,t)=0$.
\end{remark}

\smallskip

\begin{proposition}\label{pr:Lp-1Lp-1L1}
Any morphism $\lambda:L_{p-1}\otimes L_{p-1}\rightarrow L_1$ in $\Repp$ such that $[\lambda]$ is an isomorphism in $\Ver_p$ satisfies $\lambda(v_0\otimes v_{p-2})\neq 0$. Moreover, after normalizing 
to get $\lambda(v_0\otimes v_{p-2})=1$, the inverse of $[\lambda]$ is $[\lambda']$, where 
$\lambda':\FF\rightarrow L_{p-1}\otimes L_{p-1}$ is the linear map that takes $1$ to the element
$w$ in \eqref{eq:w}. (Note that this is a morphism in $\Repp$ because of item (2) in Lemma \ref{le:Lp-1Lp-1L1}.)
\end{proposition}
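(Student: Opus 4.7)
My plan is to reduce both assertions to a single congruence in $L_{p-1}\otimes L_{p-1}$:
\[
w \;\equiv\; v_0\otimes v_{p-2} \pmod{\delta(L_{p-1}\otimes L_{p-1})}.
\]
Because $L_1$ is trivial, any $\lambda\in\Hom_{\Repp}(L_{p-1}\otimes L_{p-1},L_1)$ is a $C_p$-invariant functional and hence vanishes on $\delta(L_{p-1}\otimes L_{p-1})$, so this congruence immediately yields $\lambda(w)=\lambda(v_0\otimes v_{p-2})$ for every such $\lambda$. The ``high-degree tail'' of $w$ (the $\mu_{ij}$-part with $i+j\geq p-1$) already lies in $\delta(L_{p-1}\otimes L_{p-1})$ by Lemma \ref{le:Lp-1Lp-1L1}(1), so only the explicit leading term of $w$ contributes.

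To prove the congruence I work in the polynomial model $\cA=\FF[s,t]/(s^{p-1},t^{p-1})$ from the proof of Lemma \ref{le:Lp-1Lp-1L1}, where $v_i\otimes v_j \leftrightarrow s^i t^j$ and $\delta$ acts as multiplication by $(1+s)(1+t)-1$. Modulo $(s+t+st)\cA$, the relation $(1+s)(1+t)\equiv 1$ gives $t\equiv -s(1+s)^{-1}$; moreover, since $s^{p-1}=0$ one has $(1+s)^p=1$ in $\cA$, so $(1+s)^{-1}=(1+s)^{p-1}$ and powers of $(1+s)$ depend only on their exponents modulo $p$. Substituting $t\equiv -s(1+s)^{-1}$ into the leading term of $w$ and summing the resulting geometric series
\[
\sum_{i=0}^{p-2}(1+s)^{-i} \;=\; \frac{1-(1+s)^{-(p-1)}}{1-(1+s)^{-1}} \;=\; \frac{-s}{s/(1+s)} \;=\; -(1+s),
\]
I then check that $\sum_{i=0}^{p-2}(-1)^i s^{p-2-i}t^i$ reduces to $-s^{p-2}(1+s)=-s^{p-2}$ in $\cA/(s+t+st)\cA$. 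A parallel computation, using $(-1)^{p-2}=-1$ and $(1+s)^{-(p-2)}=(1+s)^2$ (since $(p-1)(p-2)\equiv 2\pmod p$), shows that $t^{p-2}$ also reduces to $-s^{p-2}$, which establishes the congruence.

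With the congruence in hand, the remainder is categorical. The coinvariant space $\cA/(s+t+st)\cA$ is isomorphic as an $\FF$-algebra to $\FF[s]/(s^{p-1})$, so $-s^{p-2}\neq 0$ there and thus $v_0\otimes v_{p-2}\notin\delta(L_{p-1}\otimes L_{p-1})$; consequently some $\lambda_0$ satisfies $\lambda_0(w)=\lambda_0(v_0\otimes v_{p-2})\neq 0$, which shows $[\lambda']$ is non-negligible. Since $\Hom_{\Ver_p}(L_1,L_{p-1}\otimes L_{p-1})=\FF$ (because $L_{p-1}\otimes L_{p-1}\cong L_1$ is simple in $\Ver_p$), $[\lambda']$ is therefore an isomorphism. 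For any $\lambda$ with $[\lambda]$ an isomorphism, the composition $[\lambda]\circ[\lambda']\in\End_{\Ver_p}(L_1)=\FF\cdot\id_{L_1}$ equals $\lambda(w)\cdot\id_{L_1}$ and is nonzero as a composition of isomorphisms, so $\lambda(v_0\otimes v_{p-2})=\lambda(w)\neq 0$. After normalizing to $\lambda(v_0\otimes v_{p-2})=1$, we obtain $[\lambda]\circ[\lambda']=[\id_{L_1}]$, so $[\lambda']$ is the two-sided inverse of $[\lambda]$ in $\Ver_p$.

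The main obstacle is the polynomial identity itself: the collapse of both $\sum(-1)^i s^{p-2-i}t^i$ and $t^{p-2}$ onto the same basis element of $\cA/(s+t+st)\cA$ hinges on the two characteristic-$p$ facts $(1+s)^p=1$ and $(p-1)(p-2)\equiv 2\pmod p$, and the rest of the proposition essentially reads off from it.
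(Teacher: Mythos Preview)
Your proof is correct and takes a somewhat different route from the paper's. Both arguments reduce to showing $\lambda(w)=\lambda(v_0\otimes v_{p-2})$ for every invariant functional $\lambda$, but the paper does this by a short telescoping recursion: from $0=\lambda\bigl(\delta(v_{p-3-i}\otimes v_i)\bigr)$ together with Lemma~\ref{le:Lp-1Lp-1L1}(1) one gets $\lambda(v_{p-2-i}\otimes v_i)=-\lambda(v_{p-3-i}\otimes v_{i+1})$, and iterating gives $\lambda(w)=-(p-1)\lambda(v_0\otimes v_{p-2})=\lambda(v_0\otimes v_{p-2})$. You instead prove the stronger coinvariant identity $w\equiv v_0\otimes v_{p-2}\pmod{\delta\cA}$ by direct polynomial manipulation in $\cA/(s+t+st)\cA\cong\FF[s]/(s^{p-1})$. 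For the first assertion the approaches diverge more: the paper argues that $[\lambda]$ being an isomorphism forces the induced map $L_{p-1}\to L_{p-1}^*$ to be an isomorphism already in $\Repp$, so $\lambda$ is a nondegenerate bilinear form, and then $\lambda(v_0\otimes v_{p-2})=0$ would yield $\lambda(v_i\otimes v_{p-2})=0$ for all $i$; you instead first show that $[\lambda']$ is an isomorphism (using your congruence and $\dim\Hom_{\Ver_p}(L_1,L_{p-1}\otimes L_{p-1})=1$) and then read off $\lambda(v_0\otimes v_{p-2})=\lambda(w)\neq 0$ from $[\lambda]\circ[\lambda']=\lambda(w)\cdot[\id_{L_1}]$. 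Your approach yields the cleaner structural statement in the coinvariants, at the price of a slightly more roundabout argument for nondegeneracy; the paper's is more direct but does not isolate the congruence.

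One presentational point: your displayed geometric-series identity $\sum_{i=0}^{p-2}(1+s)^{-i}=\dfrac{-s}{\,s/(1+s)\,}=-(1+s)$ divides by the zero-divisor $s/(1+s)$, which is not literally valid in $\FF[s]/(s^{p-1})$. The equality is nevertheless correct (for instance because $\sum_{j=0}^{p-1}(1+s)^j=0$ in characteristic $p$), and in any case all you actually need is $s^{p-2}\sum_{i=0}^{p-2}(1+s)^{-i}=-s^{p-2}$, which follows rigorously from $s\cdot\sum_{i=0}^{p-2}(1+s)^{-i}=-s(1+s)$ after multiplying by $s^{p-3}$; the residual ambiguity in $\mathrm{Ann}(s)=\FF s^{p-2}$ is annihilated by the factor $s^{p-2}$.
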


\begin{proof}
If $\lambda:L_{p-1}\otimes L_{p-1}\rightarrow L_1$ is a morphism in $\Repp$, it induces a morphism
$L_{p-1}\rightarrow L_{p-1}\cong L_{p-1}^*$. If $[\lambda]$ is an isomorphism, the latter morphism is not negligible, so it is an isomorphism in $\Repp$. In other words, the $\mathsf{C}_p$-invariant bilinear
form induced by $\lambda$ is nondegenerate. Now, if $\lambda(v_0\otimes v_{p-2})$ were $0$,
then we would have
$0=\lambda\bigl(\delta^i(v_0\otimes v_{p-2})\bigr)=\lambda(v_i\otimes v_{p-2})$ for all $i$, a contradiction with the nondegeneracy of $\lambda$.

Now, assume $\lambda(v_0\otimes v_{p-2})=1$.
Since $\lambda$ is trivial on $\delta(L_{p-1}\otimes L_{p-1})$ and
 $v_i\otimes v_j\in \delta(L_{p-1}\otimes L_{p-1})$ for $i+j\geq p-1$ by Lemma \ref{le:Lp-1Lp-1L1}, we have
\[
\lambda(w)=\sum_{i=0}^{p-2}(-1)^i\lambda(v_{p-2-i}\otimes v_i).
\]
Also, we get
\[
\begin{split}
0&=\lambda\bigl(\delta(v_{p-3-i}\otimes v_i)\bigr) \\
 &=\lambda(v_{p-2-i}\otimes v_i)+\lambda(v_{p-3-i}\otimes v_{i+1})
                     +\lambda(v_{p-2-i}\otimes v_{i+1})\\
 &=\lambda(v_{p-2-i}\otimes v_i)+\lambda(v_{p-3-i}\otimes v_{i+1})\quad
 \text{because $(p-2-i)+(i+1)=p-1$}.
\end{split}
\]
Hence 
\begin{equation}\label{eq:lambdaii+1}
\lambda(v_{p-2-i}\otimes v_i)=-\lambda(v_{p-3-i}\otimes v_{i+1})
\end{equation} 
and
\[
\begin{split}
\lambda(w)&=\lambda(v_{p-2}\otimes v_0)-\lambda(v_{p-3}\otimes v_1)+-
         \cdots-\lambda(v_0\otimes v_{p-2})\\
    &=-(p-1)\lambda(v_0\otimes v_{p-2})=1.
\end{split}
\]
We conclude that $\lambda\circ \lambda'=\id$, and hence $[\lambda]^{-1}=[\lambda']$, as required.
\end{proof}

\begin{remark}\label{re:walphap}
Had we considered $\Repap$ instead of $\Repp$, the element $w$
to consider in \eqref{eq:w} would be simpler. An object in $\Repap$ is
a left module for $\FF[t]$, $t^p=0$. The action of $t$ on a tensor 
product being given by $t(a\otimes b)=t(a)\otimes b+a\otimes t(b)$.
Then the `easier' morphism in $\Repap$ given by:
\[
\begin{split}
\lambda':\FF&\rightarrow L_{p-1}\otimes L_{p-1}\\
1&\mapsto \sum_{i=0}^{p-2}(-1)^iv_{p-2-i}\otimes v_i,
\end{split}
\]
induces an isomorphism $[\lambda']$ in $\Ver_p$. (Here 
$\{v_0,\ldots,v_{p-2}\}$ is a basis of $L_{p-1}$ with 
$t(v_i)=v_{i+1}$, $0\leq i\leq p-3$, $t(v_{p-2})=0$.)

Anyway, the order $5$ automorphism of the Albert algebra over a field of characteristic $5$ considered in Section \ref{se:order5} is quite natural and easy to deal with. Hence, we will stick to $\Repp$.
\end{remark}

\smallskip

As in \cite{DES}, the braiding in $\Ver_p$, for objects $X,Y$, is given by $[c_{X,Y}]$, 
where $c_{X,Y}$ is the braiding in $\Repp$ (i.e., the swap $x\otimes y\mapsto y\otimes x$). Then, identifying 
$L_1\otimes L_1\simeq L_1$,
$L_1\otimes L_{p-1}\simeq L_{p-1}\simeq L_{p-1}\otimes L_1$, and $L_{p-1}\otimes L_{p-1}\simeq L_1$ as above, we have $[c_{L_1,L_1}]=[\id_{L_1}]$, 
$[c_{L_1,L_{p-1}}]=[\id_{L_{p-1}}]=[c_{L_{p-1},L_1}]$, but $[c_{L_{p-1},L_{p-1}}]=-[\id_{L_1}]$, because the element $w$ is skew-symmetric.


\subsection{Equivalence of \texorpdfstring{$\sVec$}{sVec} and the full tensor subcategory
of \texorpdfstring{$\Ver_p$}{Verp} generated by 
\texorpdfstring{$L_1$}{L1} and \texorpdfstring{$L_{p-1}$}{Lp-1}}\label{ss:Ver_sVec}

This equivalence is well known, but we are interested in  concrete formulas for this equivalence.

The objects of the category $\sVec$ of vector superspaces (over our ground field 
$\FF$) are the $\ZZ/2$-graded finite-dimensional vector spaces $X=X\subo\oplus X\subuno$, and the morphisms $f:X\rightarrow Y$ are the linear maps preserving this grading: $f(X\subo)\subseteq Y\subo$, $f(X\subuno)\subseteq Y\subuno$. We will write 
$f=f\subo\oplus f\subuno$, with $f_{\bar a}:X_{\bar a}\rightarrow Y_{\bar a}$ given by the restriction of $f$, $a=0,1$. This is a symmetric tensor category, with the braiding given by the \emph{signed swap}: 
\[
c_{X,Y}:X\otimes Y\rightarrow Y\otimes X, \quad
x\otimes y\mapsto (-1)^{\lvert x\rvert\lvert y\rvert}y\otimes x,
\] 
for homogeneous elements $x,y$, where $(-1)^{\lvert x\rvert\lvert y\rvert}$ is 
$-1$ if both $x$ and $y$ are odd, and it is $1$ otherwise.

The $\FF$-linear functor given on objects and morphisms by
\begin{equation}\label{eq:F}
\begin{split}
F:\sVec&\longrightarrow \Ver_p\\
X\subo\oplus X\subuno &\mapsto X\subo\oplus(X\subuno\otimes L_{p-1})\\
f\subo\oplus f\subuno&\mapsto [f\subo\oplus(f\subuno\otimes\id_{L_{p-1}})],
\end{split}
\end{equation}
provides an equivalence of symmetric tensor categories between $\sVec$ and the full tensor subcategory of $\Ver_p$
generated by $L_1$ and $L_{p-1}$. Here the action of $\mathsf{C}_p$ on 
$X\subo\oplus (X\subuno\otimes L_{p-1})$ is given by the action on $L_{p-1}$. That is, 
$X\subo$ is a trivial module for $\mathsf{C}_p$, while 
$\sigma(x\subuno\otimes v)\bydef x\subuno\otimes \sigma(v)$, for all 
$x\subuno\in X\subuno$ and $v\in L_{p-1}$.

The functor $F$ is a monoidal functor with natural isomorphism 
$J:F(\cdot)\otimes F(\cdot)\rightarrow F(\cdot\otimes\cdot)$ given by 
$J_{X,Y}=[j_{X,Y}]$, where $j_{X,Y}$ is the morphism in $\Repp$ defined as follows,
for $X=X\subo\oplus X\subuno$ and $Y=Y\subo\oplus Y\subuno$:
\begin{equation}\label{eq:J}
\begin{split}
j_{X,Y}:\Bigl(X\subo\oplus(X\subuno&\otimes L_{p-1})\Bigr)\otimes 
\Bigl(Y\subo\oplus(Y\subuno\otimes L_{p-1})\Bigr)\\
&\longrightarrow(X\subo\otimes Y\subo\oplus X\subuno\otimes Y\subuno)\oplus
\bigl((X\subo\otimes Y\subuno\oplus X\subuno\otimes Y\subo)\otimes L_{p-1}\bigr)\\
x\subo\otimes y\subo&\mapsto x\subo\otimes y\subo,\\
x\subo\otimes (y\subuno\otimes v)&\mapsto (x\subo\otimes y\subuno)\otimes v,\\
(x\subuno\otimes v)\otimes y\subo&\mapsto (x\subuno\otimes y\subo)\otimes v,\\
(x\subuno\otimes u)\otimes (y\subuno\otimes v)&\mapsto 
\lambda(u\otimes v)x\subuno\otimes y\subuno,
\end{split}
\end{equation}
for all $x\subo\in X\subo$, $x\subuno\in X\subuno$, $y\subo\in Y\subo$, $y\subuno\in Y\subuno$, and $u,v\in L_{p-1}$, where $\lambda$ is any fixed morphism as in Proposition \ref{pr:Lp-1Lp-1L1} with 
$\lambda(v_0\otimes v_{p-2})=1$.

\smallskip

The inverse of $J_{X,Y}$ is $J_{X,Y}^{-1}=[j'_{X,Y}]$, where $j'_{X,Y}$ is defined as follows:

\bigskip

\pagebreak

\begin{equation}\label{eq:J'}
\begin{split}
j'_{X,Y}:(X\subo\otimes Y\subo\oplus X\subuno\otimes Y\subuno)&\oplus
\bigl((X\subo\otimes Y\subuno\oplus X\subuno\otimes Y\subo)\otimes L_{p-1}\bigr)\\
&\longrightarrow
\Bigl(X\subo\oplus(X\subuno\otimes L_{p-1})\Bigr)\otimes 
\Bigl(Y\subo\oplus(Y\subuno\otimes L_{p-1})\Bigr)
\\
x\subo\otimes y\subo&\mapsto x\subo\otimes y\subo,\\
(x\subo\otimes y\subuno)\otimes v &\mapsto x\subo\otimes (y\subuno\otimes v),\\
(x\subuno\otimes y\subo)\otimes v&\mapsto (x\subuno\otimes v)\otimes y\subo,\\
x\subuno\otimes y\subuno&\mapsto 
         \sum_{i=0}^{p-2}(-1)^i(x\subuno\otimes v_{p-2-i})\otimes (y\subuno\otimes v_i)\\
 &\qquad\qquad + \sum_{\substack{0\leq i,j\leq p,\\ i+j\geq p-1}}\mu_{ij}(x\subuno\otimes v_i)\otimes (y\subuno\otimes v_j).
\end{split}
\end{equation}
In other words, $j_{X,Y}'(x\subuno\otimes y\subuno)=c_{23}(x\subuno\otimes y\subuno\otimes w)$, with 
$w$ in \eqref{eq:w}, and where $c_{23}(a\otimes b\otimes c\otimes d)=a\otimes c\otimes b\otimes d$.

Note that $F$ preserves the braiding too. In other words, the following diagram is commutative for all $X,Y$:
\[
\begin{tikzcd}
F(X)\otimes F(Y)\arrow[rr, "\hbox{$[\text{`swap'}]$}"]\arrow[d, "J_{X,Y}"']&&
F(Y)\otimes F(X)\arrow[d, "J_{Y,X}"]\\
F(X\otimes Y)\arrow[rr, "F(\text{`signed swap'})"]&&F(Y\otimes X)
\end{tikzcd}
\]
Therefore, the functor $F$ is braided and monoidal.

\smallskip

If $\cA$ is an object in $\Repp$, fix a decomposition $\cA=\cA_1\oplus\cdots\oplus \cA_p$, where 
$\cA_i$ is a direct sum of copies of $L_i$ for $i=1,\ldots,p$. Write $A\subo=\cA_1$, and fix a 
subspace $A\subuno$ of $\cA_{p-1}$ such that $\cA_{p-1}=A\subuno\oplus \delta(\cA_{p-1})$. 
(That is, $A\subuno$ is a subspace spanned by the `heads' of the Jordan blocks of the action of 
$\sigma$ or $\delta$ on 
$\cA_{p-1}$.)

Then, $A\bydef A\subo\oplus A\subuno$ is an object in $\sVec$. Consider the morphism in $\Repp$:
\begin{equation}\label{eq:iotaA}
\begin{split}
\iota_{\cA}: F(A)=A\subo\oplus(A\subuno\otimes L_{p-1})&\longrightarrow \cA\\
                                                  a\subo\quad&\mapsto\ a\subo\in\cA_1,\\
                                                  a\subuno\otimes v_i\ &\mapsto\  \delta^i(a\subuno)\in\cA_{p-1}.
\end{split}
\end{equation}
The image of $\iota_\cA$ is $\cA'\bydef\cA_1\oplus \cA_{p-1}$.

\begin{remark}\label{re:iota_iso}
If $\cA=\cA_1\oplus\cA_{p-1}\oplus\cA_p$ (and this is always the case for $p=3$), then $[\iota_\cA]$ is an isomorphism in $\Ver_p$.
\end{remark}

\bigskip

\subsection{Recipe to get superalgebras from algebras in 
\texorpdfstring{$\Repp$}{RepCp}}

Given a morphism $m:A\otimes B\rightarrow C$ in $\sVec$, the composition
\[
F(A)\otimes F(B)\xrightarrow{J_{A,B}}F(A\otimes B)\xrightarrow{F(m)} F(C)
\]
gives a morphism $F(A)\otimes F(B)\rightarrow F(C)$ in $\Ver_p$. In particular, with $A=B=C$, given an algebra $(A,m)$ in $\sVec$, $F(A)$ is an algebra in 
$\Ver_p$ with multiplication given by the composition
\[
F(A)\otimes F(A)\xrightarrow{J_{A,A}}F(A\otimes A)\xrightarrow{F(m)} F(A).
\]

Now, given a morphism $\mu:\cA\otimes \cB\rightarrow \cC$ in $\Repp$, decompose 
$\cA=\bigoplus_{i=1}^{p}\cA_i$, $\cB=\bigoplus_{i=1}^{p}\cB_i$, and 
$\cC=\bigoplus_{i=1}^{p}\cC_i$ as above, and consider the associated objects
$A=A\subo\oplus A\subuno$, $B=B\subo\oplus B\subuno$, $C=C\subo\oplus C\subuno$ in $\sVec$. 
Our goal
is to find a morphism $m:A\otimes B\rightarrow C$ in $\sVec$ such that the diagram
\begin{equation}\label{eq:FABCmuFm}
\begin{tikzcd}
F(A)\otimes F(B)\arrow[r, "J_{A,B}"]\arrow[d, "\hbox{$[\iota_{\cA}\otimes\iota_{\cB}]$}"']&
F(A\otimes B)\arrow[r, "F(m)"]&F(C)\arrow[d, "\hbox{$[\iota_{\cC}]$}"]\\
\cA\otimes\cB\arrow[rr, "\hbox{$[\mu]$}"]&&\cC
\end{tikzcd}
\end{equation}
is commutative.

This was done for $p=3$ in \cite{DES}, where $[\iota_{\cA}],[\iota_{\cB}],[\iota_{\cC}]$ are isomorphisms. A variation of the arguments there works. To begin, consider the inclusion map
$i_{\cA}:\cA'\bydef \cA_1\oplus \cA_{p-1}\hookrightarrow\cA$, and similarly $i_{\cB}$ and $i_{\cC}$. The linear map
\[
\mu':\cA'\otimes\cB'\longrightarrow \cC'
\]
given by 
\[
\mu'(x\otimes y)\bydef\begin{cases}
\mathrm{proj}_{\cC_1} \mu(x\otimes y)&\text{for $x\in\cA_1, y\in\cB_1$ or 
                                $x\in\cA_{p-1}, y\in\cB_{p-1}$,}\\
\mathrm{proj}_{\cC_{p-1}} \mu(x\otimes y)&\text{for $x\in\cA_1,y\in\cB_{p-1}$, or
                              $x\in\cA_{p-1},y\in\cB_1$,}
\end{cases}
\]
(projections relative to the decomposition $\cC=\cC_1\oplus\cdots\oplus \cC_p$), is also a morphism  
in $\Repp$ and the diagram 
\begin{equation}\label{eq:mu'}
\begin{tikzcd}
\cA'\otimes\cB'\arrow[rr, "\hbox{$[\mu']$}"]
\arrow[d, hook]&& \cC'\arrow[d, hook]\\
\cA\otimes\cB\arrow[rr, "\hbox{$[\mu]$}"]&&\cC
\end{tikzcd}
\end{equation}
is commutative.

In particular, if $(\cA,\mu)$ is an algebra in $\Repp$, then $(\cA',\mu')$ is another algebra in $\Repp$,
and $(\cA',[\mu'])$ is a subalgebra of the algebra $(\cA,[\mu])$ in $\Ver_p$.

\smallskip

Let $\mu:\cA\otimes\cB\rightarrow\cC$ be a morphism in $\Repp$. Decompose $\cA=\bigoplus_{i=1}^{p-2}\cA_i$, where $\cA_i$ is a direct sum of copies of $L_i$ for the action of $\mathsf{C}_p$, and similarly for $\cB$ and $\cC$. Write $A\subo=\cA_1$ and fix a subspace $A\subuno$ of $\cA_{p-1}$ with $\cA_{p-1}=A\subuno\oplus\delta(\cA_{p-1})$, and do the same for $\cB$ and $\cC$. In particular, we have
\begin{equation}\label{eq:Csplitting}
\cC=C\subo\oplus\cC_2\oplus\cdots\oplus \cC_{p-2}\oplus C\subuno\oplus\delta(\cC_{p-1})\oplus\cC_p.
\end{equation}

\begin{recipe}\label{recipe1} Take projections relative to the decomposition
\eqref{eq:Csplitting}, and define the linear map $m:A\otimes B\rightarrow C$  as follows:
\[
\begin{split}
m(x\subo\otimes y\subo) &=\mathrm{proj}_{C\subo} \mu(x\subo\otimes y\subo)\\
m(x\subo\otimes y\subuno)&=\mathrm{proj}_{C\subuno} \mu(x\subo\otimes y\subuno)\\
m(x\subuno\otimes y\subo)&=\mathrm{proj}_{C\subuno} \mu(x\subuno\otimes y\subo)\\
m(x\subuno\otimes y\subuno)&=\mathrm{proj}_{C\subo} 
                    \mu\bigl(x\subuno\otimes\delta^{p-2}(y\subuno)\bigr)
\end{split}
\]
for all $x\subo\in A\subo$, $y\subo\in B\subo$ and $x\subuno\in A\subuno$,
$y\subuno\in B\subuno$. 
The linear map $m$ is a morphism in the category $\sVec$.
\end{recipe}

This morphism $m$ is what we need to make the diagram in \eqref{eq:FABCmuFm} commutative.

\begin{theorem}\label{th:main}
Let $\mu:\cA\otimes\cB\rightarrow\cC$ be a homomorphism in $\Repp$. Pick
decompositions of $\cA$, $\cB$, and $\cC$ as in  \eqref{eq:Csplitting}. Define a morphism 
$m:A\otimes B\rightarrow C$
in $\sVec$ by means of Recipe \ref{recipe1}. Then, with $\iota_A,\iota_B,\iota_C$ 
as in 
\eqref{eq:iotaA}, the diagram \eqref{eq:FABCmuFm} is commutative.
\end{theorem}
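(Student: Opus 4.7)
The plan is to verify the commutativity of diagram~\eqref{eq:FABCmuFm} in $\Ver_p$ by (i) projecting the target $\cC$ onto its isotypic components $\cC_1,\ldots,\cC_p$, and (ii) decomposing $F(A)\otimes F(B)$ into its four natural summands $A\subo\otimes B\subo$, $A\subo\otimes(B\subuno\otimes L_{p-1})$, $(A\subuno\otimes L_{p-1})\otimes B\subo$, and $(A\subuno\otimes L_{p-1})\otimes(B\subuno\otimes L_{p-1})$. Since the upper path $[\iota_{\cC}]\circ F(m)\circ J_{A,B}$ factors through $\cC'=\cC_1\oplus\cC_{p-1}$ and $\cC_p\cong 0$ in $\Ver_p$, it suffices to prove that the projection of the lower path to $\cC_k$ is negligible for each $2\le k\le p-2$ and that the two paths agree in $\Ver_p$ after projection to $\cC_1$ and to $\cC_{p-1}$. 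The former is automatic because $F(A)\otimes F(B)$ decomposes in $\Repp$ as a sum of copies of $L_1$, $L_{p-1}$, and $L_{p-1}\otimes L_{p-1}\cong L_1\oplus L_p^{p-2}$, so every $\Repp$-morphism from these summands into a direct sum of copies of $L_k$ with $2\le k\le p-2$ factors through pairwise non-isomorphic simple objects in $\Ver_p$.

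For the projections to $\cC_1$ and $\cC_{p-1}$ I would handle each summand of $F(A)\otimes F(B)$ separately. On $A\subo\otimes B\subo$ both paths produce $\mathrm{proj}_{\cC_1}\mu(x\subo\otimes y\subo)$, while the components of $\mu(x\subo\otimes y\subo)$ outside $\cC_1$ give morphisms $L_1\to L_j$ with $j\neq 1$, all negligible in $\Ver_p$. On $A\subo\otimes(B\subuno\otimes L_{p-1})$ (and symmetrically), triviality of $\sigma$ on $A\subo$ forces $\delta(x\subo\otimes y)=x\subo\otimes\delta(y)$, and equivariance of $\mu$ then gives $\mu(x\subo\otimes\delta^i y\subuno)=\delta^i\mu(x\subo\otimes y\subuno)$; the upper path returns $\delta^i\bigl(\mathrm{proj}_{C\subuno}\mu(x\subo\otimes y\subuno)\bigr)$ and the lower one $\delta^i\mu(x\subo\otimes y\subuno)$, so their difference is either a $\Repp$-morphism $L_{p-1}\to\cC_j$ with $j\neq p-1$, or a morphism $L_{p-1}\to\cC_{p-1}$ whose image lies in $\delta(\cC_{p-1})\cong L_{p-2}^{\,n}$ and so factors as $L_{p-1}\twoheadrightarrow L_{p-2}\hookrightarrow\cC_{p-1}$; in either case it is negligible in $\Ver_p$.

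The main obstacle is the odd-odd summand, where both paths descend to $\mathsf{C}_p$-morphisms $L_{p-1}\otimes L_{p-1}\to\cC_1$. For fixed $x\subuno,y\subuno$, the upper path yields $\tilde\lambda_2:=(z\,\cdot)\circ\lambda$ with $z:=\mathrm{proj}_{C\subo}\mu(x\subuno\otimes\delta^{p-2}y\subuno)$, while the lower one yields $\tilde\lambda_1$ defined by $\tilde\lambda_1(v_i\otimes v_j):=\mathrm{proj}_{\cC_1}\mu(\delta^i x\subuno\otimes\delta^j y\subuno)$; equivariance of the latter comes from that of the map $v_i\otimes v_j\mapsto\delta^i x\subuno\otimes\delta^j y\subuno$. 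Since $L_{p-1}\otimes L_{p-1}\cong L_1\oplus L_p^{p-2}$ in $\Repp$ with the $L_1$-summand spanned by the invariant $w$ of \eqref{eq:w}, the classes $[\tilde\lambda_1]$ and $[\tilde\lambda_2]$ coincide in $\Ver_p$ if and only if $\tilde\lambda_1(w)=\tilde\lambda_2(w)$. Now $\tilde\lambda_2(w)=\lambda(w)\,z=z$ by Proposition~\ref{pr:Lp-1Lp-1L1}, while equivariance together with $\delta|_{\cC_1}=0$ forces $\tilde\lambda_1$ to vanish on $\delta(L_{p-1}\otimes L_{p-1})$ and hence on every $v_i\otimes v_j$ with $i+j\ge p-1$ by Lemma~\ref{le:Lp-1Lp-1L1}(1); therefore $\tilde\lambda_1(w)=\sum_{i=0}^{p-2}(-1)^i\tilde\lambda_1(v_{p-2-i}\otimes v_i)$. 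The recursion that produced \eqref{eq:lambdaii+1} applies verbatim to $\tilde\lambda_1$, giving $\tilde\lambda_1(v_{p-2-i}\otimes v_i)=(-1)^{p-2-i}z$, and summing yields $(p-1)(-1)^{p-2}z=z$ in characteristic~$p$ (for $p$ odd), matching $\tilde\lambda_2(w)=z$ and completing the verification.
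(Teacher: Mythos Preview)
Your argument is correct and follows essentially the same route as the paper. Both proofs verify that the difference of the two paths is negligible by treating the four parity summands separately, use the same key computation for the odd--odd piece (vanishing on $\delta(L_{p-1}\otimes L_{p-1})$, the recursion of \eqref{eq:lambdaii+1}, and $\lambda(w)=1$), and dispose of the even--odd pieces by observing that the discrepancy lands in $\delta(\cC_{p-1})$, a sum of copies of $L_{p-2}$. The only organizational difference is that the paper first reduces to $\cC'=\cC_1\oplus\cC_{p-1}$ via diagram \eqref{eq:mu'} and then precomposes with $j'_{A,B}$ to work on $F(A\otimes B)$, whereas you work directly on $F(A)\otimes F(B)$ with $j_{A,B}$ and postcompose with the projections $\cC\to\cC_k$; these are dual formulations of the same verification.
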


\begin{proof}
Because of the commutativity of the diagram in \eqref{eq:mu'}, it is enough to prove that 
the diagram (in $\Ver_p$)
\begin{equation}\label{eq:FABCmuFm'}
\begin{tikzcd}
F(A)\otimes F(B)\arrow[r, "J_{A,B}"]\arrow[d, "\hbox{$[\iota_A\otimes\iota_B]$}"']&
F(A\otimes B)\arrow[r, "F(m)"]&F(C)\arrow[d, "\hbox{$[\iota_C]$}"]\\
\cA'\otimes\cB'\arrow[rr, "\hbox{$[\mu']$}"]&&\cC'
\end{tikzcd}
\end{equation}
is commutative. (Here we use the same notation $\iota_A$ to denote the isomorphism
$F(A)\simeq \cA'$ in $\Repp$ induced by the original $\iota_A:F(A)\rightarrow\cA$ in
\eqref{eq:iotaA}.)

Using the inverse of $J_{A,B}$ (see \eqref{eq:J'}), this is equivalent to checking in the following diagram
 in $\Repp$:
\[
\begin{tikzcd}
F(A)\otimes F(B)\arrow[d, "\iota_A\otimes\iota_B"']&
F(A\otimes B)\arrow[l, "j'_{A,B}"']\arrow[rr, 
"m\subo\oplus(m\subuno\otimes\id_{V_1}\hspace*{-2pt})"]&&F(C)\arrow[d, "\iota_C"]\\
\cA'\otimes\cB'\arrow[rrr, "\mu'"]&&&\cC'
\end{tikzcd}
\]
that the difference $\Phi\bydef 
\iota_C\circ\bigl(m\subo\oplus(m\subuno\otimes\id_{V_1})\bigr)
- \mu'\circ(\iota_A\otimes\iota_B)\circ j'_{A,B}$ is negligible.

For $x\subo\in A\subo,y\subo\in B\subo$ we get
\[
\begin{split}
&x\subo\otimes y\subo\xrightarrow{j'_{A,B}}x\subo\otimes y\subo
\xrightarrow{\iota_A\otimes\iota_B} x\subo\otimes y\subo
\xrightarrow{\mu'}\mu'(x\subo\otimes y\subo),\\
&x\subo\otimes y\subo\xrightarrow{m\subo}m(x\subo\otimes y\subo)
 =\mu'(x\subo\otimes y\subo)
\xrightarrow{\iota_{\cC}}\mu'(x\subo\otimes y\subo),
\end{split}
\]
so that $\Phi$ is trivial on $A\subo\otimes B\subo$. 

For 
$x\subuno\in A\subuno$, $y\subuno\in B\subuno$ we get that both
$\espan{x\subuno,\delta(x\subuno),\ldots,\delta^{p-2}(x\subuno)}$ and
$\espan{y\subuno,\delta(y\subuno),\ldots,\delta^{p-2}(y\subuno)}$ are isomorphic in $\Repp$ to 
$L_{p-1}$. Equation \eqref{eq:Ad} shows that for $i+j\geq p-1$, 
$\delta^i(x\subuno)\otimes \delta^j(y\subuno)$ is in the image of the action of $\delta$ on
the tensor product
\[
\espan{x\subuno,\delta(x\subuno),\ldots,\delta^{p-2}(x\subuno)}\otimes
\espan{y\subuno,\delta(y\subuno),\ldots,\delta^{p-2}(y\subuno)},
\] 
say 
$\delta^i(x\subuno)\otimes \delta^j(y\subuno)=\delta(Z)$, and hence 
\[
\mu'\bigl(\delta^i(x\subuno)\otimes \delta^j(y\subuno)\bigr)
=\mu'\bigl(\delta(Z)\bigr)=\delta\bigl(\mu'(Z)\bigr)=0,
\] 
because $\mu'$ is a morphism in $\Repp$, and
$\mu'(Z)$ lies in $\cC_1$, so it is annihilated by $\delta$.

Also, the arguments leading to \eqref{eq:lambdaii+1} give
\[
\mu'\bigl(\delta^{p-2-i}(x\subuno)\otimes\delta^{i}(y\subuno)\bigr)
   =-\mu'\bigl(\delta^{p-3-i}(x\subuno)\otimes\delta^{i+1}(y\subuno)\bigr).
\]
Then, we get
\begin{equation}\label{eq:wx1y1}
\begin{split}
x\subuno\otimes y\subuno&\xrightarrow{j'_{A,B}}
 \sum_{i=0}^{p-2}(-1)^i(x\subuno\otimes v_{p-2-i})\otimes(y\subuno\otimes v_i) +
   \sum_{\substack{0\leq i,j\leq p,\\ i+j\geq p-1}}\mu_{ij}(x\subuno\otimes v_i)\otimes         
           (y\subuno\otimes v_j)\\
&\xrightarrow{\iota_A\otimes \iota_B}
  \sum_{i=0}^{p-2}(-1)^i\delta^{p-2-i}(x\subuno)\otimes \delta^i(y\subuno) +
   \sum_{\substack{0\leq i,j\leq p,\\ i+j\geq p-1}}\mu_{ij}\delta^i(x\subuno)\otimes 
             \delta^j(y\subuno)\\
&\xrightarrow{\ \mu'\ } 
    \sum_{i=0}^{p-2}(-1)^i\mu'\bigl(\delta^{p-2-i}(x\subuno)\otimes \delta^i(y\subuno)\bigr)\\
&\qquad\qquad\qquad\qquad\qquad
   = -(p-1)\mu'\bigl(x\subuno\otimes\delta^{p-2}(y\subuno)\bigr)
   =\mu'\bigl(x\subuno\otimes\delta^{p-2}(y\subuno)\bigr)\\[8pt]
x\subuno\otimes y\subuno&\xrightarrow{m\subo}m(x\subuno\otimes y\subuno)
   =\mu'\bigl(x\subuno\otimes \delta^{p-2}(y\subuno)\bigr)
\xrightarrow{\iota_{\cC}}\mu'\bigl(x\subuno\otimes \delta^{p-2}(y\subuno)\bigr),
\end{split}
\end{equation}
and $\Phi$ is trivial too on $A\subuno\otimes B\subuno$. 

Now, for $x\subo\in A\subo$
and $y\subuno\in B\subuno$, and for $i=0,\ldots,p-2$, we get
\[
\begin{split}
&(x\subo\otimes y\subuno)\otimes v_i
 \xrightarrow{j'_{A,B}} x\subo\otimes (y\subuno\otimes v_i)
                    \xrightarrow{\iota_A\otimes\iota_B} x\subo\otimes\delta^i(y\subuno)
 \xrightarrow{\mu'} \mu'\bigl(x\subo\otimes\delta^i(y\subuno) \bigr),
 \\[4pt]
&(x\subo\otimes y\subuno)\otimes v_i
 \xrightarrow{m\subuno\otimes\id_{L_{p-1}}} m(x\subo\otimes y\subuno)\otimes v_i
   \xrightarrow{\ \iota_{\cC}\ } \delta^i\bigl(m(x\subo\otimes y\subuno)\bigr)
\end{split}
\]
Hence we have
\[
\begin{split}
\Phi\bigl((x\subo\otimes y\subuno)\otimes v_i\bigr)
& = \delta^i\bigl(m(x\subo\otimes y\subuno)\bigr)
     -\mu'\bigl(x\subo\otimes\delta^i(y\subuno) \bigr)\\
& =\delta^i\bigl(m(x\subo\otimes y\subuno)-\mu'(x\subo\otimes y\subuno)\bigr)
\\
& =\delta^i\bigl(\mathrm{proj}_{C\subuno}(\mu(x\subo\otimes y\subuno))-
   \mathrm{proj}_{\cC_{p-1}}(\mu(x\subo\otimes y\subuno))\bigr)\\
& =-\delta^i\bigl(\mathrm{proj}_{\delta(\cC_{p-1})}(\mu(x\subo\otimes y\subuno))\bigr).
\end{split}
\]
It follows that the restriction $\Phi\vert_{(A\subo\otimes B\subuno)\otimes L_{p-1}}$,
takes $(A\subo\otimes B\subuno)\otimes L_{p-1}$, which is a direct sum of copies of
$L_{p-1}$, to $\delta(\cC_{p-1})$, which is a direct sum of copies of $L_{p-2}$, and hence it is negligible. In the same vein, the restriction 
$\Phi\vert_{(A\subuno\otimes B\subo)\otimes L_{p-1}}$ is negligible.

We conclude that $\Phi$ is negligible, as required.
\end{proof}

In particular, if $(\cA,\mu)$ is an algebra in $\Repp$, with $\mu(x\otimes y)=xy$ for all $x,y$, and we fix a decomposition   $\cA=A\subo\oplus \cA_2\oplus\cdots\cA_{p-2}\oplus A\subuno\oplus \delta(\cA_{p-1})\oplus\cA_p$ as in \eqref{eq:Csplitting}, Recipe \ref{recipe1} becomes the following one.

\begin{recipe}\label{recipe} Take projections relative to the decomposition above, and define a multiplication $m$ 
($m(x\otimes y)\bydef x\diamond y$) on $A\bydef A\subo\oplus A\subuno$ as follows:
\[
\begin{split}
x\subo\diamond y\subo &=\mathrm{proj}_{A\subo} (x\subo y\subo)\\
x\subo\diamond y\subuno&=\mathrm{proj}_{A\subuno} (x\subo y\subuno)\\
x\subuno\diamond y\subo&=\mathrm{proj}_{A\subuno} (x\subuno y\subo)\\
x\subuno\diamond y\subuno&=\mathrm{proj}_{A\subo} 
                    \bigl(x\subuno\delta^{p-2}(y\subuno)\bigr)
\end{split}
\]
for all $x\subo,y\subo\in A\subo$ and $x\subuno,y\subuno\in A\subuno$. 

The algebra $(A,m)$ is an algebra in $\sVec$ (a superalgebra).
\end{recipe}

In this case, Theorem \ref{th:main} restricts to the following Corollary:

\begin{corollary}\label{co:main}
Let $(\cA,\mu)$ be an algebra in $\Repp$, with $\mu(x\otimes y)=xy$ for all $x,y$. Fix a decomposition
$\cA=A\subo\oplus \cA_2\oplus\cdots\cA_{p-2}\oplus A\subuno\oplus \delta(\cA_{p-1})\oplus\cA_p$ 
as in \eqref{eq:Csplitting}. Define a multiplication $m$ in $A=A\subo\oplus A\subuno$ by means of Recipe \ref{recipe}.
Then, the algebra  $\bigl(F(A),F(m)\circ J_{A,A}\bigr)$ in $\Ver_p$ is isomorphic to the subalgebra
$(\cA',[\mu'])$, with $\mu'$ in \eqref{eq:mu'} of the algebra $(\cA,[\mu])$
\end{corollary}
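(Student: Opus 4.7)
The plan is to deduce this corollary directly from Theorem~\ref{th:main} by specializing to the case $\cA=\cB=\cC$ and $A=B=C$. The key ingredient is the observation that, with the codomain restricted from $\cA$ to its image $\cA'=\cA_1\oplus\cA_{p-1}$, the map $\iota_\cA:F(A)\to\cA'$ defined in \eqref{eq:iotaA} is an isomorphism in $\Repp$, and hence $[\iota_\cA]$ is an isomorphism in $\Ver_p$. This fact is already implicit in the proof of Theorem~\ref{th:main}, where $\iota_A$ is treated as an isomorphism $F(A)\simeq\cA'$ in $\Repp$.

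First I would verify this bijectivity. On the summand $A\subo=\cA_1$ the map $\iota_\cA$ is the identity, while on $A\subuno\otimes L_{p-1}$ it sends $a\subuno\otimes v_i$ to $\delta^i(a\subuno)$. Fixing a basis $\{a_1,\ldots,a_n\}$ of $A\subuno$, the choice of $A\subuno$ as a complement to $\delta(\cA_{p-1})$ in $\cA_{p-1}$ guarantees that $\{\delta^i(a_j):0\le i\le p-2,\,1\le j\le n\}$ is a basis of $\cA_{p-1}$, so $\iota_\cA$ is a bijection of $F(A)$ onto $\cA'$. It is clearly $\mathsf{C}_p$-equivariant.

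Next I would apply Theorem~\ref{th:main} with $\cA=\cB=\cC$. The commutative diagram~\eqref{eq:FABCmuFm'} appearing in its proof then specializes to the equality
\[
[\iota_\cA]\circ F(m)\circ J_{A,A}\;=\;[\mu']\circ\bigl([\iota_\cA]\otimes[\iota_\cA]\bigr)
\]
in $\Ver_p$. This asserts that $[\iota_\cA]$ intertwines the multiplication on $F(A)$ given by $F(m)\circ J_{A,A}$ with the multiplication $[\mu']$ on $\cA'$. Combined with the fact that $[\iota_\cA]$ is invertible in $\Ver_p$, this yields the desired algebra isomorphism between $\bigl(F(A),F(m)\circ J_{A,A}\bigr)$ and $(\cA',[\mu'])$. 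There is no real obstacle here: the genuine work has already been carried out in Theorem~\ref{th:main}, and the corollary is essentially a rephrasing of that theorem in the algebra setting once the isomorphism $[\iota_\cA]:F(A)\to\cA'$ has been recorded.
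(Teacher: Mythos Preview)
Your proposal is correct and follows exactly the paper's approach: the corollary is stated as the specialization of Theorem~\ref{th:main} to $\cA=\cB=\cC$, and the isomorphism $\iota_\cA:F(A)\to\cA'$ in $\Repp$ (noted parenthetically in the proof of Theorem~\ref{th:main}) is what turns the commutative square~\eqref{eq:FABCmuFm'} into an algebra isomorphism. You have simply made explicit the details that the paper leaves implicit.
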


In case $\cA=\cA_1\oplus\cA_{p-1}\oplus \cA_p$, the morphism $[\iota_\cA]$ is an isomorphism,
and the algebra $\bigl(F(A),F(m)\circ J_{A,A}\bigr)$ in $\Ver_p$ is isomorphic to $(\cA,[\mu])$.
In this situation, we will say that the algebra $\cA$ \emph{semisimplifies} to the superalgebra 
$A=A\subo\oplus A\subuno$.

\begin{remark}\label{re:invariant}
The reader may feel uncomfortable with the fact that Recipe \ref{recipe} (or Recipe \ref{recipe1}) depends on the decomposition $\cA=A\subo\oplus\cA_2\oplus\cdots\oplus\cA_{p-2}\oplus A\subuno
\oplus\delta(\cA_{p-1})\oplus\cA_p$, which is not canonical. A canonical version of Recipe \ref{recipe} is given in Appendix \ref{se:App1}. A similar canonical version of Recipe \ref{recipe1} is deduced in the same way.
\end{remark}

\begin{remark}\label{re:alphap}
Recipes \ref{recipe1} and \ref{recipe} remain valid if $\Repap$ is 
considered instead of $\Repp$. The arguments are a bit simpler, because of Remark \ref{re:walphap}.
\end{remark}

\bigskip

\section{The Albert algebra}\label{se:Albert}

Let $\OO$ be the Cayley algebra over $\FF$. It has a basis $\{e_i\}_{i=0}^7$ with multiplication given by the following relations:
$e_0=1$ is the unity element, $e_i^2=-e_0$ for any $i=1,\ldots,7$, and
\[
e_ie_{i+1}=e_{i+3}=-e_{i+1}e_i\quad\text{and cyclically on $i,i+1,i+3$, for $i=1,\ldots,7$,}
\]
where the indices are taken modulo $7$. The canonical involution $\nu:x\mapsto \bar x$ is given by 
$\nu(e_0)=e_0$ and
$\nu(e_i)=-e_i$, for $i=1,\ldots,7$, and the multiplicative norm is determined by $\nup(e_i)=1$ for any
$i$ and $\nup(e_i,e_j)\,\bigl(\bydef\nup(e_i+e_j)-\nup(e_i)-\nup(e_j)\bigr)=0$ for $i\neq j$. That is, the given
basis is orthonormal.

Then, the Albert algebra is the algebra of $3\times 3$ Hermitian matrices over the Cayley algebra: 
$\alb=H_3(\OO)$, with 
multiplication given by
\[
X\cdot Y=\frac{1}{2}(XY+YX).
\]
Then we have $\alb=\FF E_1\oplus \FF E_2\oplus \FF E_3\oplus \iota_1(\OO)\oplus\iota_2(\OO)\oplus\iota_3(\OO)$,
with
\begin{equation}\label{eq:Esiotas}
\begin{aligned}
E_1&=\begin{pmatrix} 1&0&0\\ 0&0&0\\ 0&0&0\end{pmatrix}, &
E_2&=\begin{pmatrix} 0&0&0\\ 0&1&0\\ 0&0&0\end{pmatrix}, &
E_3&=\begin{pmatrix} 0&0&0\\ 0&0&0\\ 0&0&1\end{pmatrix}, \\
\iota_1(a)&=2\begin{pmatrix} 0&0&0\\ 0&0&\bar a\\ 0&a&0\end{pmatrix}, &
\iota_2(a)&=2\begin{pmatrix} 0&0&a\\ 0&0&0\\ \bar a&0&0\end{pmatrix}, &
\iota_3(a)&=2\begin{pmatrix} 0&\bar a&0\\ a&0&0\\ 0&0&0\end{pmatrix}, 
\end{aligned}
\end{equation}
for any $a\in\OO$. The $E_i$'s are idempotents, and their sum is the unity $1$ of $\alb$. The other products
are as follows (see \cite[(5.2)]{EKmon}):
\[
\begin{split}
&E_i\cdot\iota_i(a)=0,\quad E_{i+1}\cdot\iota_i(a)=\frac{1}{2}\iota_i(a)=E_{i+2}\cdot\iota_i(a),\\
&\iota_i(a)\cdot\iota_{i+1}(b)=\iota_{i+2}(a\bullet b),\quad 
          \iota_i(a)\cdot\iota_i(b)=2\nup(a,b)(E_{i+1}+E_{i+2}),
\end{split}
\]
for any $a,b\in\OO$ with $i=1,2,3$ taken modulo $3$, and where $a\bullet b\bydef \bar a\,\bar b$ is the
\emph{para-Hurwitz} product on $\OO$. 

\smallskip

The Albert algebra is naturally graded by $\bigl(\ZZ/2\bigr)^2$ with:
\begin{equation}\label{eq:Z22grading}
\alb_{(\bar 0,\bar 0)}=\FF E_1\oplus \FF E_2\oplus \FF E_3,\quad
\alb_{(\bar 0,\bar 1)}=\iota_1(\OO),\quad
\alb_{(\bar 1,\bar 0)}=\iota_2(\OO),\quad
\alb_{(\bar 1,\bar 1)}=\iota_3(\OO).
\end{equation}

\bigskip

\section{The spin group}\label{se:Spin}

The linear map
\begin{equation}\label{eq:Phi1}
\begin{split}
\OO&\longrightarrow \End_\FF(\OO\oplus\OO)\\
x&\ \mapsto\quad \begin{pmatrix} 0&l_x\\ r_x&0\end{pmatrix},
\end{split}
\end{equation}
where $l_x(y)=r_y(x)=x\bullet y\,(=\bar x\bar y)$, induces an algebra isomorphism
\[
\Phi:\Cl(\OO,\nup)\longrightarrow \End_\FF(\OO\oplus\OO).
\]
 To avoid confusion, we will denote by $x\cdot y$ the multiplication in the Clifford algebra 
 $\Cl(\OO,\nup)$.

For any even element $u\in\Cl\subo(\OO,\nup)$, we have 
\begin{equation}\label{eq:Phi2}
\Phi(u)=\begin{pmatrix}\rho_u^-&0\\ 0&\rho_u^+\end{pmatrix},
\end{equation}
for suitable endomorphisms $\rho^{\pm}_u\in\End_\FF(\OO)$.

Recall that the spin group associated to the quadratic pair $(\OO,\nup)$ is the group
\[
\Spin(\OO,\nup)=\{u\in\Cl\subo(\OO,\nup)\mid u\cdot \tau(u)=1\ 
    \text{and}\ u\cdot \OO\cdot u^{-1}=\OO\},
\]
where $\tau$ is the standard involution of $\Cl(\OO,\nup)$, whose restriction to $\OO$ is the identity.

We have the following group isomorphism (see, e.g., \cite[Theorem 5.5]{EKmon}):
\begin{equation}\label{eq:Spin8RelatedTriples}
\begin{split}
\Spin(\OO,\nup)&\longrightarrow \{(\varphi_1,\varphi_2,\varphi_3)\in \mathrm{SO}(\OO,\nup)^3\mid \varphi_1(x\bullet y)=\varphi_2(x)\bullet \varphi_3(y)\ \forall x,y\in\OO\}\\
 u\quad &\ \mapsto \qquad \bigl(\chi_u,\rho^+_u,\rho^-_u\bigr),
 \end{split}
\end{equation}
where $\chi_u(x)=u\cdot x\cdot u^{-1}$. Note that for $u\in\OO$ with $\nup(u)\neq 0$, we have
\[
\chi_u(v)=\begin{cases} -v&\text{if $\nup(u,v)=0$,}\\
                                  u&\text{for $v=u$,}
                                  \end{cases}
\]
so that $\chi_u$ is minus the reflection relative to the hyperplane orthogonal to $u$.

\begin{remark}\label{re:SpinNatural}
Note that the projection $\pi_1:\Spin(\OO,\nup)\rightarrow \mathrm{SO}(\OO,\nup)$, $u\mapsto \chi_u$,
gives the natural representation of $\Spin(\OO,\nup)$ on $\OO$, while the projections 
$\pi_2:\Spin(\OO,\nup)\rightarrow \mathrm{SO}(\OO,\nup)$, $u\mapsto \rho^+_u$, and
$\pi_3:\Spin(\OO,\nup)\rightarrow \mathrm{SO}(\OO,\nup)$, $u\mapsto \rho^-_u$, give the two
half-spin representations.
\end{remark}

\begin{lemma}[{\cite[Corollary 5.6]{EKmon}}]\label{le:spin}
The map
\begin{equation}\label{eq:Psi}
\begin{split}
\Psi:\Spin(\OO,\nup)&\longrightarrow \Aut(\alb)\\
u\quad &\mapsto\quad \Psi_u: E_i\mapsto E_i,\ \iota_1(a)\mapsto\iota_1(\chi_u(a)),\\
   &\qquad\qquad\  \iota_2(a)\mapsto\iota_2(\rho^+_u(a)),\ 
     \iota_3(a)\mapsto\iota_3(\rho^-_u(a)),
\end{split}
\end{equation}
is a one-to-one group homomorphism with image the stabilizer in $\Aut(\alb)$ of the idempotents $E_i$, $i=1,2,3$.
\end{lemma}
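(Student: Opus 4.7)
The plan is to verify three claims in turn: $\Psi_u\in\Aut(\alb)$ for every $u$; $\Psi$ is injective; and $\Psi\bigl(\Spin(\OO,\nup)\bigr)$ is exactly the stabilizer of $E_1,E_2,E_3$. That $\Psi$ is a group homomorphism is immediate because $\Phi$ is an algebra isomorphism, so the maps $u\mapsto\chi_u$ and $u\mapsto\rho^\pm_u$ respect the Clifford multiplication.

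To check that $\Psi_u$ preserves the Jordan product, I would run through the multiplication table recorded just before Section~\ref{se:Spin}. The products involving only the $E_i$'s and the products $E_j\cdot\iota_i(a)$ are preserved because the $E_i$ are fixed and each $\iota_i(\OO)$ is stabilized. The diagonal products $\iota_i(a)\cdot\iota_i(b)=2\nup(a,b)(E_{i+1}+E_{i+2})$ are preserved because $\chi_u,\rho^+_u,\rho^-_u$ all lie in $\mathrm{SO}(\OO,\nup)$ and thus preserve the polar form $\nup(\cdot,\cdot)$. The off-diagonal products $\iota_i(a)\cdot\iota_{i+1}(b)=\iota_{i+2}(a\bullet b)$, one for each $i\pmod 3$, translate into the three cyclic related-triple identities
\[
\chi_u(x\bullet y)=\rho^+_u(x)\bullet\rho^-_u(y),\quad \rho^-_u(x\bullet y)=\chi_u(x)\bullet\rho^+_u(y),\quad \rho^+_u(x\bullet y)=\rho^-_u(x)\bullet\chi_u(y).
\]
The first is exactly \eqref{eq:Spin8RelatedTriples}; the other two follow from the triality of $\Spin(8)$, or concretely by comparing $\Phi(u)\Phi(x)$ with $\Phi\bigl(\chi_u(x)\bigr)\Phi(u)$ using the block form \eqref{eq:Phi2}.

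For injectivity, $\Psi_u=\id_\alb$ forces $\chi_u=\rho^+_u=\rho^-_u=\id_\OO$, so \eqref{eq:Phi2} gives $\Phi(u)=\id_{\End_\FF(\OO\oplus\OO)}$ and hence $u=1$ since $\Phi$ is an isomorphism.

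For the image, $\Psi_u$ clearly fixes each $E_i$. Conversely, given $\varphi\in\Aut(\alb)$ fixing $E_1,E_2,E_3$, each subspace $\iota_i(\OO)$ is intrinsically described as $\{x\in\alb\mid E_{i+1}\cdot x=\tfrac12 x=E_{i+2}\cdot x\}$, so $\varphi$ preserves each of them and induces $\varphi_i:\OO\to\OO$ via $\varphi(\iota_i(a))=\iota_i(\varphi_i(a))$. The diagonal products force $\varphi_i\in\Ort(\OO,\nup)$, and the off-diagonal products yield a related-triple condition for $(\varphi_1,\varphi_2,\varphi_3)$. The main obstacle is to upgrade each $\varphi_i$ from $\Ort(\OO,\nup)$ to $\mathrm{SO}(\OO,\nup)$, which is required before \eqref{eq:Spin8RelatedTriples} can be applied; I would handle this either by invoking the connectedness of the algebraic group $\AAut(\alb)$ of type $F_4$, so that the closed subgroup stabilizing $E_1,E_2,E_3$ is also connected and maps into $\mathrm{SO}(\OO,\nup)^3$, or by a direct determinant computation using the three cyclic relations. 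Once this step is settled, \eqref{eq:Spin8RelatedTriples} produces a unique $u\in\Spin(\OO,\nup)$ with $\Psi_u=\varphi$, completing the proof.
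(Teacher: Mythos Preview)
The paper does not prove this lemma; it simply records it with a citation to \cite[Corollary~5.6]{EKmon}. Your proposal therefore supplies an argument where the paper offers none, and your overall strategy---verify that $\Psi_u$ respects the multiplication table, check injectivity via $\Phi$, and identify the stabilizer with the related-triple group---is exactly the one followed in that reference. Your derivation of the two extra cyclic identities from $\Phi(u)\Phi(x)=\Phi(\chi_u(x))\Phi(u)$ is correct and is how it is done there.

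There is one genuine soft spot. Your first route through the ``main obstacle'' rests on a false general principle: closed subgroups of connected algebraic groups need not be connected (the normalizer of a maximal torus is the standard counterexample), so connectedness of $\AAut(\alb)$ alone does not force the stabilizer of $E_1,E_2,E_3$ to be connected. Your alternative determinant route can be completed, but the three cyclic relations by themselves only give $\det\varphi_1=\det\varphi_2=\det\varphi_3$: from $\varphi_1\circ l_x=l_{\varphi_2(x)}\circ\varphi_3$ and the identity $\det(l_x)=-\nup(x)^4$ (valid since $l_x r_x=\nup(x)\id$ with $l_x^{*}=r_x$, and the sign is fixed by $l_1=\nu$) one obtains $\det\varphi_1=\det\varphi_3$, and cyclically. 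To conclude that this common value is $+1$ you need one more ingredient, for instance that every element of $\Aut(\alb)$ acts on $\alb$ with determinant $1$ (true because $\AAut(\alb)$ is connected and simple of type $F_4$, hence has no nontrivial characters); then $\det\varphi_1\det\varphi_2\det\varphi_3=1$ forces $\varepsilon^3=1$ with $\varepsilon\in\{\pm1\}$, so $\varepsilon=1$.
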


\bigskip

\section{A suitable order \texorpdfstring{$5$}{5} automorphism of \texorpdfstring{$\alb$}{A}}\label{se:order5}

\emph{From now on, the characteristic of the ground field will be assumed to be $5$.}

\smallskip

For $i=0,\ldots,6$, consider the norm one element $u_i\bydef \frac{1}{\sqrt{2}}(e_i-e_{i+1})$ in $\OO$.
Then the reflection  $-\chi_{u_i}$ permutes $e_i$ and $e_{i+1}$ and fixes $e_j$ for $j\neq i,i+1$.

Consider then the element 
\begin{equation}\label{eq:u}
u=-u_3\cdot u_4\cdot u_5\cdot u_6,
\end{equation} 
which lies in $\Spin(\OO,\nup)$. The
rotation $\chi_u$ permutes cyclically 
\[
e_3\mapsto e_4\mapsto e_5\mapsto e_6\mapsto e_7\mapsto e_3
\]
and fixes $e_0$, $e_1$,  $e_2$. Equations \eqref{eq:Phi1} and \eqref{eq:Phi2} give, using $-\frac{1}{4}=1$, the following expressions:
\begin{equation}\label{eq:rho}
\begin{split}
\rho^-_u&=-l_{u_3}r_{u_4}l_{u_5}r_{u_6}=l_{e_3-e_4}r_{e_4-e_5}l_{e_5-e_6}r_{e_6-e_7},\\
\rho^+_u&=-r_{u_3}l_{u_4}r_{u_5}l_{u_6}=r_{e_3-e_4}l_{e_4-e_5}r_{e_5-e_6}l_{e_6-e_7}.
\end{split}
\end{equation}

 A straightforward but tedious computation gives $(\rho^{\pm}_u-\id)^4=0$, and the Jordan canonical
 form of $\rho^{\pm}_u$ consists of two blocks of length $4$.
 Note 
  that for $x,y\in\OO$, $r_x(y)=y\bullet x=\bar y\bar x
 =\overline{xy}=\overline{\bar x\bullet\bar y}=\nu l_{\bar x}\nu$, and hence we get 
 $\rho^+_u=\nu\rho^-_u\nu$, so it is enough to work with $\rho^-_u$. (The SageMath code to check 
the assertion on the Jordan blocks is given in  Appendix \ref{se:App2}.)

The conclusion is that the element $u$ in \eqref{eq:u} and the automorphism 
\begin{equation}\label{eq:psiauto}
\sigma\bydef\Psi_u
\end{equation} 
in \eqref{eq:Psi} have order $5$. The automorphism $\sigma$ 
decomposes $\alb$ into the direct sum of:
\begin{itemize}
\item Three Jordan blocks of length $1$: $\FF E_1$, $\FF E_2$, $\FF E_3$.
\item Another three Jordan blocks of length $1$ in $\iota_1(\OO)$: $\FF \iota_1(e_i)$, $i=0,1,2$.
\item A single Jordan block of length $5$ in $\iota_1(\OO)$: $\bigoplus_{i=3}^7\FF\iota_1(e_i)$.
\item Two Jordan blocks of length $4$ in $\iota_2(\OO)$: 
   $\bigoplus_{i=0}^3\FF\iota_2\bigl((\rho_u^+)^i(e_0)\bigr)$ and \break
   $\bigoplus_{i=0}^3\FF\iota_2\bigl((\rho_u^+)^i(e_2)\bigr)$.
\item Another two Jordan blocks of length $4$ in $\iota_3(\OO)$:
$\bigoplus_{i=0}^3\FF\iota_3\bigl((\rho_u^-)^i(e_0)\bigr)$ and
   $\bigoplus_{i=0}^3\FF\iota_3\bigl((\rho_u^-)^i(e_2)\bigr)$.
\end{itemize}

\begin{remark} If one takes the element $-u=u_3\cdot u_4\cdot u_5\cdot u_6$ instead of $u$, then the 
automorphism $\Psi_{-u}$ has order $10$, instead of $5$, as $\chi_u=\chi_{-u}$, but 
$\rho^{\pm}_{-u}=-\rho^{\pm}_u$.
\end{remark}

\begin{remark}\label{re:spin5}
Under the identification in \eqref{eq:Spin8RelatedTriples} of $\Spin(\OO,\nup)$ with
$ \{(\varphi_1,\varphi_2,\varphi_3)\in \mathrm{SO}(\OO,\nup)^3\mid \varphi_1(x\bullet y)=\varphi_2(x)\bullet \varphi_3(y)\ \forall x,y\in\OO\}$,
the automorphism $\sigma$ in \eqref{eq:psiauto} belongs to the image under $\Psi$ in 
\eqref{eq:Psi} of the subgroup $\{(\varphi_1,\varphi_2,\varphi_3)\in\Spin(\OO,\nup)\mid 
\varphi_1(e_i)=e_1\ \forall i=0,1,2\}$, which is isomorphic to $\Spin(5)$. The natural module of 
$\Spin(5)$ can be identified with $\bigoplus_{i=3}^{7}\FF e_i$, and the action of $\sigma$
 on this natural module is indecomposable (a unique
 Jordan block of length $5$). On the other hand, each of the half-spin modules for
 $\Spin(8)=\Spin(\OO,\nup)$ decomposes as the direct sum of two copies of the spin module for
 $\Spin(5)$. Hence the action of $\sigma\in\Spin(5)$ on this spin module is indecomposable again (unique Jordan block of length $4$).
 
 This $\Spin(5)$ subgroup of $\Aut(\alb)$ corresponds to the two middle vertices in Dynkin's diagram 
 of $F_4$: 
 \begin{picture}(75,10)
 \multiput(10,3)(20,0){4}{\circle{4}}
\put(12,3){\line(1,0){16}}
\put(52,3){\line(1,0){16}}
\put(31,4){\line(1,0){18}}
\put(31,2){\line(1,0){18}}
\put(36,.5){$>$}
\end{picture}.
\end{remark}

\bigskip

\section{From the Albert algebra to Kac's Jordan superalgebra \texorpdfstring{$K_{10}$}{K10}}

Consider the Albert algebra $\alb$ and its automorphism $\sigma=\Psi_u$ above, which allows us to
view $\alb$ as a Jordan algebra in $\Repu$. Then 
$\alb=\alb_1\oplus\alb_4\oplus\alb_5$ as in \S \ref{se:Repu}, where $\alb_i$ is a direct sum of copies
of $L_i$, and we may take the subspaces
$A\subo$ and $A\subuno$ as follows:
\begin{equation}\label{eq:A0A1}
\begin{split}
A\subo&=\FF E_1\oplus\FF E_2\oplus\FF E_3\oplus \FF \iota_1(e_0)\oplus\FF \iota_1(e_1)\oplus
                \FF \iota_1(e_2),\\
                &=\FF E_1\oplus\left(\FF E_2\oplus\FF E_3\oplus \FF \iota_1(e_0)\oplus\FF \iota_1(e_1)\oplus
                \FF \iota_1(e_2)\right)\\
                &\quad\text{(direct sum of two ideals),}\\[4pt]
A\subuno&=\FF\iota_2(e_0)\oplus\FF\iota_2(e_2)\oplus\FF\iota_3(e_0)\oplus\FF\iota_3(e_2).
\end{split}
\end{equation}
In this case, $A\subo$ is a subalgebra of $\alb$, so the multiplication $\diamond$ in Recipe \ref{recipe} coincides with the multiplication in $\alb$.
The Jordan algebra $A\subo$ is the direct sum of two simple ideals: $\FF E_0\cong \FF$ and
$\cI=\FF (E_2+E_3)\oplus V$, with
$V=\FF(E_2-E_3)\oplus\FF \iota_1(e_0)\oplus\FF \iota_1(e_1)\oplus\FF \iota_1(e_2)$, which is
the  Jordan algebra of the quadratic form $q$ on the vector space $V$, where $E_2+E_3$ is the
unity element of $\cI$, and  $(E_2-E_3)^2=E_2+E_3$, $\iota_1(e_i)^2=4(E_2+E_3)$, $i=0,1,2$, and all other products are $0$. Here $q$ is the quadratic form on $V$ where $E_2-E_3$, $\iota_1(e_0)$, 
$\iota_1(e_1)$, and $\iota_1(e_2)$ are orthogonal, and $q(E_2-E_3)=1$, $q\bigl(\iota_1(e_i)\bigr)=4$, $i=0,1,2$. Therefore, $A\subo$ is isomorphic to the even part of Kac's Jordan superalgebra $K_{10}$.

In order to check that the Jordan superalgebra $A\subo\oplus A\subuno$, with the multiplication in Recipe \eqref{recipe} is isomorphic to $K_{10}$, we can explicitly compute this multiplication for the elements
in the basis in \eqref{eq:A0A1}, but this is tedious. Alternatively, some known results can be used.

\begin{lemma}
Any unital Jordan bimodule for $A\subo$ such that both $E_1$ and $E_2+E_3$ act faithfully is a direct sum
of copies of a unique irreducible four-dimensional $A_0$-module.
\end{lemma}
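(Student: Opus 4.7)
I would proceed via the Peirce decomposition of $M$ with respect to the supplementary orthogonal idempotents $E_1$ and $E':=E_2+E_3$ of $A\subo$, which satisfy $E_1\cdot E'=0$ and $E_1+E'=1$. For any unital Jordan bimodule $M$, the commuting operators $L_{E_1}$ and $L_{E'}$ yield a simultaneous Peirce decomposition $M=M_{11}\oplus M_{12}\oplus M_{22}$, on which $L_{E_1}$ acts as $1,\tfrac12,0$ and $L_{E'}$ as $0,\tfrac12,1$ respectively. Interpreting ``$E_1$ and $E'$ act faithfully'' as the statement that $L_{E_1}$ and $L_{E'}$ have no kernel in $M$, I obtain $M_{11}=\ker L_{E'}=0$ and $M_{22}=\ker L_{E_1}=0$, so $M=M_{12}$ and both $L_{E_1}$ and $L_{E'}$ act as $\tfrac12 I$.

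It remains to describe the action of the four-dimensional vector part $V=\FF(E_2-E_3)\oplus\bigoplus_{i=0}^{2}\FF\,\iota_1(e_i)$ of $\cI$, which by the Peirce rules preserves $M=M_{12}$. My plan is to derive the Clifford relation $L_v^2=\tfrac{q(v)}{4}I$ on $M$ for every $v\in V$. The key computation proceeds by applying the Jordan identity $(x^2y)x=x^2(yx)$ in the split null extension $A\subo\oplus M$ (with $M^2=0$) to $x=v+m$ and $y=w+n$ for $v,w\in V$ and $m,n\in M$. Extracting the $M$-component, and using $v^2=q(v)E'$, $v\cdot w=\tfrac{b(v,w)}{2}E'$ (where $b$ is the polar form of $q$), and $L_{E'}=\tfrac12 I$ on $M$, yields the operator identity
\[
L_v L_w L_v=\tfrac{b(v,w)}{4}L_v-\tfrac{q(v)}{4}L_w\quad\text{on }M.
\]
Combining this with the fundamental-formula consequence $U_v^2=q(v)^2\,U_{E'}=0$ on $M_{12}$ (where $U_{E'}=2L_{E'}^2-L_{E'}=0$), together with a polarization argument in $v$, then produces the Clifford relation.

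Once the Clifford relation is in place, the map $v\mapsto 2L_v$ extends to a representation of the Clifford algebra $\mathrm{Cliff}(V,q)$ on $M$. Because $\FF$ is algebraically closed of characteristic $5\neq 2$ and $(V,q)$ is a nondegenerate four-dimensional quadratic space, $\mathrm{Cliff}(V,q)\cong M_4(\FF)$ has a unique simple module of dimension $4$. Hence $M=M_{12}$ decomposes as a direct sum of copies of this unique simple module, which carries a uniquely determined $A\subo$-bimodule structure (with $E_1$, $E'$ acting as $\tfrac12 I$ and $V$ acting via the Clifford representation).

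The main technical obstacle is the final step in the Clifford-relation derivation, namely deducing the quadratic identity $L_v^2=\tfrac{q(v)}{4}I$ from the cubic identity $L_v^3=\tfrac{q(v)}{4}L_v$ and the nilpotency of $N_v:=L_v^2-\tfrac{q(v)}{4}I$; this requires an anticommutation analysis of $N_v$ with the operators $L_w$ for $w\in V$. A cleaner alternative is to invoke Jacobson's classification of finite-dimensional unital Jordan bimodules for the Jordan algebra of a nondegenerate quadratic form in characteristic $\neq 2$, which yields the conclusion directly.
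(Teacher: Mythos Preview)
Your approach is correct and is essentially a hands-on unpacking of what the paper does in a single line: the paper just invokes Jacobson's theorem (\cite[Chapter~II, Theorem~16]{JacobsonJordan}) to conclude that such a bimodule is an associative module for the unital special universal envelope of $\cI$, namely $\Cl(V,q)\cong\Mat_4(\FF)$. Your Peirce reduction to $M=M_{12}$ is the implicit first step behind that citation, and the ``cleaner alternative'' you mention at the end is exactly the paper's proof.

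The ``technical obstacle'' you flag is not a real obstacle; you already have both ingredients needed to finish. Setting $w=v$ in your identity $L_vL_wL_v=\tfrac{b(v,w)}{4}L_v-\tfrac{q(v)}{4}L_w$ gives $L_v^3=\tfrac{q(v)}{4}L_v$, hence $L_v^4=\tfrac{q(v)}{4}L_v^2$. On $M_{12}$ one has $U_v=2L_v^2-\tfrac{q(v)}{2}I$, so your relation $U_v^2=0$ reads $4L_v^4-2q(v)L_v^2+\tfrac{q(v)^2}{4}I=0$. Substituting the previous expression for $L_v^4$ yields $-q(v)L_v^2+\tfrac{q(v)^2}{4}I=0$, hence $L_v^2=\tfrac{q(v)}{4}I$ whenever $q(v)\neq 0$. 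Since $v\mapsto L_v^2-\tfrac{q(v)}{4}I$ is quadratic in $v$ and vanishes on the Zariski-dense set of anisotropic vectors, it vanishes identically on $V$; no anticommutation analysis is needed. So your direct route closes cleanly, and coincides with what Jacobson's theorem supplies.
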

\begin{proof} \cite[Chapter II, Theorem 16]{JacobsonJordan} shows that any such unital Jordan bimodule
is an associative module for the unital special universal envelope of the ideal $\cI$, which is the Clifford algebra of $(V,q)$, and hence isomorphic to $\Mat_4(\FF)$. The result follows.
\end{proof}

\begin{lemma}[{\cite[Lemma 3]{RZ}}] If $\cJ$ is a simple Jordan superalgebra with even part isomorphic
to $(K_{10})\subo$ and the odd part is the four-dimensional irreducible module in the previous lemma, 
then $\cJ$ is isomorphic to $K_{10}$.
\end{lemma}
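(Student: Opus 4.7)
The plan is to use the standard reconstruction principle: a Jordan superalgebra $\cJ=\cJ\subo\oplus\cJ\subuno$ is determined up to isomorphism by its even part $\cJ\subo$, the $\cJ\subo$-bimodule structure of $\cJ\subuno$, and a $\cJ\subo$-equivariant super-commutative bracket $\{\,,\,\}\colon\cJ\subuno\otimes\cJ\subuno\to\cJ\subo$ satisfying the super-Jordan identity. The first two ingredients are fixed by the hypotheses together with the previous lemma, so it suffices to show the bracket is unique up to an overall rescaling of $\cJ\subuno$.

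First, I would classify the $\cJ\subo$-equivariant brackets. Writing $\cJ\subo=\FF E_1\oplus\cI$, where $\cI$ is the five-dimensional Jordan algebra of the quadratic form $q$ on $V=\FF(E_2-E_3)\oplus\bigoplus_{i=0}^{2}\FF\iota_1(e_i)$, and using that $\cJ\subuno$ is the natural four-dimensional module for the unital special universal envelope $\mathrm{Mat}_4(\FF)$ of $\cI$, I would decompose $\Lambda^2\cJ\subuno$ as a $\cJ\subo$-module. Matching isotypic components against $\cJ\subo=\FF E_1\oplus\FF(E_2+E_3)\oplus V$ shows that the space of equivariant skew-symmetric brackets is small-dimensional, parametrized essentially by one coefficient $\alpha$ for the projection onto $\FF E_1$ and one coefficient $\beta$ for the projection onto the non-unital part of $\cI$.

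Next, I would impose the super-Jordan identity, most conveniently via the Grassmann envelope: $\Gamma(\cJ)$ must be an ordinary Jordan algebra, so the identity $(a^2b)a=a^2(ba)$ applied with $a=g\otimes x$ for odd $x\in\cJ\subuno$ and an odd Grassmann generator $g$ produces polynomial relations between $\alpha$ and $\beta$. Simplicity rules out $\alpha=0$ (which would leave $\FF E_1$ as an ideal in $\cJ$) and $\beta=0$ (which would make $\cI\oplus\cJ\subuno$ closed and hence a proper ideal), so both parameters are nonzero and the identity pins down their ratio. The residual scaling freedom $x\mapsto\mu x$ on $\cJ\subuno$ rescales $\{\,,\,\}$ by $\mu^2$ and absorbs the overall scale, so the isomorphism class of $\cJ$ is unique. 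Since $K_{10}$ itself satisfies all the hypotheses, we conclude $\cJ\simeq K_{10}$.

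The main obstacle is the second step: a rigorous derivation of the required ratio $\alpha/\beta$ from the super-Jordan identity calls for either detailed coordinate computations in the basis of \eqref{eq:A0A1} or a more conceptual identification of $\{\,,\,\}$ with a trace-like pairing on $\mathrm{Mat}_4(\FF)$ coming from the special universal envelope. This is the technical heart of Racine and Zelmanov's argument in \cite{RZ}, and we simply invoke it here.
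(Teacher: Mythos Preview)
The paper gives no proof of this lemma: it is stated with an explicit citation to \cite[Lemma~3]{RZ} and used as a black box in the proof of Theorem~\ref{th:K10}. So there is no ``paper's own proof'' to compare against.

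Your proposal is a plausible roadmap for how such a classification argument would go, and the overall architecture (fix the even part and the bimodule, then classify the remaining skew bracket $\cJ\subuno\otimes\cJ\subuno\to\cJ\subo$ up to rescaling, using simplicity to rule out degenerate cases) is indeed the shape of the Racine--Zelmanov argument. But as you yourself acknowledge in the last paragraph, you do not actually carry out the key step: determining the space of $\cJ\subo$-equivariant skew brackets and showing that the super-Jordan identity cuts it down to a single ratio. Your claim that this space is parametrized by exactly two scalars $\alpha,\beta$ is asserted rather than proved, and the precise meaning of ``equivariant'' here (bimodule map? compatible with the multiplication algebra?) is left vague. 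So your proposal is not an independent proof; it is an expanded citation of \cite{RZ}, which is exactly what the paper does in one line.
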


The main result is an easy consequence.

\begin{theorem}\label{th:K10}
The Jordan superalgebra $(A\subo\oplus A\subuno,\diamond)$, obtained by semisimplification of the
Albert algebra, is isomorphic to Kac's ten-dimensional simple Jordan superalgebra $K_{10}$.
\end{theorem}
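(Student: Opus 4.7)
The plan is to apply the two Lemmas just stated. The preceding discussion has already identified $A\subo$ with $(K_{10})\subo$, so the remaining tasks are to show that $A\subuno$ is the unique irreducible four-dimensional Jordan bimodule over $A\subo$ from the first Lemma, and to verify that the superalgebra $A=A\subo\oplus A\subuno$ is simple; the second Lemma will then complete the proof.

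For the bimodule identification, Corollary \ref{co:main} guarantees that $(A,\diamond)$ is a Jordan superalgebra in $\sVec$, so in particular $A\subuno$ is a unital Jordan bimodule over $A\subo$. To apply the first Lemma I need the idempotents $E_1$ and $E_2+E_3$ to act faithfully on $A\subuno$. Both lie in the $\sigma$-fixed part of $\alb$, so Recipe \ref{recipe} reduces $E\diamond x\subuno$ to $\mathrm{proj}_{A\subuno}(E\cdot x\subuno)$ for $E\in\{E_1,E_2+E_3\}$. Using the Albert multiplication rules $E_i\cdot\iota_i(a)=0$ and $E_{i+1}\cdot\iota_i(a)=\tfrac{1}{2}\iota_i(a)=E_{i+2}\cdot\iota_i(a)$ recalled in Section \ref{se:Albert}, a quick check on the four basis vectors $\iota_k(e_j)\in A\subuno$ with $k\in\{2,3\}$ and $j\in\{0,2\}$ shows that both $E_1$ and $E_2+E_3$ act on $A\subuno$ as $\tfrac{1}{2}\cdot\id$. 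Since $\tfrac{1}{2}=3\neq 0$ in characteristic $5$, this action is faithful, and the first Lemma forces $A\subuno$ to be a direct sum of copies of the unique irreducible four-dimensional $A\subo$-bimodule; dimension counting then gives that $A\subuno$ is exactly that bimodule.

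For simplicity of $A$, I would compute a few of the odd--odd products prescribed by Recipe \ref{recipe}, namely
\[
x\subuno\diamond y\subuno=\mathrm{proj}_{A\subo}\bigl(x\subuno\cdot\delta^{3}(y\subuno)\bigr),
\]
where $\delta=\sigma-1$. Products of the form $\iota_2(\cdot)\diamond\iota_2(\cdot)$ and $\iota_3(\cdot)\diamond\iota_3(\cdot)$ land, before projection, in $\FF E_1\oplus\FF E_2\oplus\FF E_3$, while the mixed products $\iota_2(\cdot)\diamond\iota_3(\cdot)$ land in $\iota_1(\OO)$. The goal is to see that the span of $A\subuno\diamond A\subuno$ projects nontrivially onto both simple ideals $\FF E_1$ and $\cI$ of $A\subo$. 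Once this is established, any nonzero graded ideal of $A$ must contain all of $A\subo$, and then (by the faithful action shown above) all of $A\subuno$ as well, so $A$ is simple and the second Lemma yields $A\cong K_{10}$.

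The main obstacle is this last verification: one must propagate $\delta^{3}$ through the explicit $\rho_u^{\pm}$-action of Section \ref{se:order5} and evaluate norm pairings of the form $\nup\bigl((\rho_u^{\pm}-1)^{i}e_j,e_k\bigr)$ in order to confirm that the resulting linear combinations have nonzero projections onto each of the two simple even ideals. The computation is entirely mechanical, but it is the only point where the specific structure of the order-$5$ automorphism constructed in Section \ref{se:order5} really gets used.
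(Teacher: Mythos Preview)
Your approach is essentially the same as the paper's: invoke the two preceding Lemmas, verify that $E_1$ and $E_2+E_3$ act on $A\subuno$ as $\tfrac{1}{2}\id$ (hence faithfully), deduce that $A\subuno$ is the irreducible four-dimensional bimodule, and reduce everything to checking simplicity via the products $A\subuno\diamond A\subuno$. The paper carries out the last step with a single explicit computation, namely $\iota_3(e_0)\diamond\iota_3(e_2)=2E_1+2E_2$, which lies in neither $\FF E_1$ nor $\cI$ and therefore settles the matter in one stroke; your outline of the same computation is correct, and the required $\delta^{3}$-values can be read off from Appendix \ref{se:App2}.

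One small point of logic: your simplicity argument is phrased as ``once $A\subuno\diamond A\subuno$ hits both ideals, any nonzero ideal contains $A\subo$, and then $A\subuno$''. The paper orders it the other way, and this matters: first one observes that any nonzero graded ideal $I$ must contain $A\subuno$ (because if $I\subo\neq 0$ it contains one of the simple ideals of $A\subo$, hence $E_1$ or $E_2+E_3$, which act invertibly on $A\subuno$; and if $I\subuno\neq 0$ irreducibility forces $I\subuno=A\subuno$). Only then does $I\supseteq A\subuno\diamond A\subuno$ follow, and the non-containment in either proper ideal of $A\subo$ forces $I\subo=A\subo$. Your ingredients are all present, but the implication as you wrote it needs this intermediate step.
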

\begin{proof}
Note that we have
$E_1\diamond X=\frac{1}{2}X=(E_2+E_3)\diamond X$ for any $X\in A\subuno$. Hence,
by the previous two lemmas, in order to check that the Jordan superalgebra $(A\subo\oplus A\subuno,\diamond)$ above is isomorphic to $K_{10}$, it is enough to check that it is simple. Since $A\subuno$ is an irreducible module for $A\subo$, any nonzero ideal of  $(A\subo\oplus A\subuno,\diamond)$ contains $A\subuno$, and hence it is enough to show
that $A\subuno\diamond A\subuno$ is not contained in any of the two proper ideals of $A\subo$.
But a straightforward computation (see Appendix \ref{se:App2}) gives:
\begin{equation}\label{eq:iota3e2}
\begin{split}
\iota_3(e_0)\diamond\iota_3(e_2)&=
         \mathrm{proj}_{A\subo}\bigl(\iota_3(e_0)\cdot\delta^3(\iota_3(e_2))\bigr)\\
         &=\mathrm{proj}_{A\subo}\bigl(\iota_3(e_0)\cdot\iota_3(-2e_0+e_1+2e_3-2e_5+e_6-e_7)\bigr)\\
         &=-8(E_1+E_2)=2E_1+2E_2,
\end{split}
\end{equation}
which is not contained neither in $\FF E_1$ nor in $\cI$.
\end{proof}

\begin{remark}\label{re:Z22K10}
Our order $5$ automorphism $\sigma$ preserves the $\bigl(\ZZ/2\bigr)^2$-grading on $\alb$ in 
\eqref{eq:Z22grading}. As a consequence the Jordan superalgebra $A=A\subo\oplus A\subuno$ inherits
this grading:
\[
\begin{aligned}
A_{(\bar 0,\bar 0)}&=\FF E_1\oplus\FF E_2\oplus\FF E_3, &
A_{(\bar 0,\bar 1)}&=\FF \iota_1(e_0)\oplus\FF \iota_1(e_1)\oplus \FF \iota_1(e_2), \\
A_{(\bar 1,\bar 0)}&=\FF\iota_2(e_0)\oplus\FF\iota_2(e_2), &
A_{(\bar 1,\bar 1)}&=\FF\iota_3(e_0)\oplus\FF\iota_3(e_2).
\end{aligned}
\]
\end{remark}

\begin{remark}\label{re:bizarre}
The Albert algebra $\alb$ satisfies the Cayley-Hamilton equation $ch_3(x)=0$, with
\[
ch_3(x)=x^3-3t_{\alb}(x)x^2+\Bigl(\frac{9}{2}t_{\alb}(x)^2-\frac{3}{2}t_{\alb}(x^2)\Bigr)x-
 \Bigl(t_{\alb}(x^3)-\frac{9}{2}t_{\alb}(x^2)t_{\alb}(x)+\frac{9}{2}t_{\alb}(x)^3\Bigr)1,
\]
where $t_{\alb}(x)=\frac{1}{3}\trace(x)$ and $\trace$ denotes the usual trace on $\alb=H_3(\OO)$. 
The linear form $t_{\alb}$ is a \emph{normalized trace}: $t_{\alb}(1)=1$ ad
$t_{\alb}\bigl((x\cdot y)\cdot z\bigr)=t_{\alb}\bigl(x\cdot (y\cdot z)\bigr)$ for all $x,y,z\in\alb$.

It is clear that $t_{\alb}:\alb\rightarrow \FF$ is a morphism in $\Repu$ and the endomorphism
$\alb\xrightarrow{t_{\alb}}\FF \hookrightarrow \alb$ is idempotent. It then follows that our
Jordan superalgebra $(A=A\subo\oplus A\subuno,\diamond)$ is endowed with the normalized trace 
induced by $t_{\alb}$, which will be denoted by $t_A$. The linearized version of the
Cayley-Hamilton equation $ch_3(x)=0$ depends on $t_{\alb}$ and the multiplication in $\alb$, 
and hence, after semisimplification, our Jordan superalgebra $A$ also satisfies the induced
super version, with $t_{\alb}$ replaced by $t_A$.

In other words, the ``bizarre result'' proved by McCrimmon \cite{McC05} asserting that, in 
characteristic $5$, Kac's superalgebra satisfies the super version of $ch_3(x)=0$ follows from the 
fact that $K_{10}$ is obtained as a semisimplification of the Albert algebra $\alb$.
\end{remark}


\bigskip

\section{From \texorpdfstring{$F_4$}{F4} to 
\texorpdfstring{$\frosp_{1,2}\oplus\frosp_{1,2}$}{osp12+osp12}}\label{se:F4osp}

The Lie algebra of derivations $\Der(\alb)$ of the Albert algebra $\alb$ is the simple Lie algebra
of type $F_4$. Let us describe it in a way suitable for our purposes following \cite[\S 5.5]{EKmon}.

To begin with, consider the \emph{triality Lie algebra}:
\[
\tri(\OO)\bydef\{(d_1,d_2,d_3)\in \frso(\OO,\nup)^3\mid d_1(x\bullet y)=d_2(x)\bullet y+x\bullet d_3(y)\ \forall x,y\in\OO\},
\]
with component-wise bracket. Any of the three projections
\begin{equation}\label{eq:pitri}
\begin{split}
\pi_i:\tri(\OO)&\longrightarrow \frso(\OO,\nup)\\
     (d_1,d_2,d_3)&\mapsto\ d_i
\end{split}
\end{equation}
is an isomorphism of Lie algebras (see, e.g., \cite[Proposition 5.30]{EKmon}).

For any $(d_1,d_2,d_3)\in \tri(\OO)$ there is a derivation $D_{(d_1,d_2,d_3)}\in\Der(\alb)$ (compare to
Lemma \ref{le:spin}) given by:
\[
D_{(d_1,d_2,d_3)}:\ E_i\mapsto 0,\ \iota_i(x)\mapsto \iota_i\bigl(d_i(x)\bigr),
\]
for any $i=1,2,3$ and $x\in\OO$.

Also, for $x\in\OO$ and $i=1,2,3$, there is a derivation $D_i(x)\bydef 2[L_{\iota_i(x)},L_{E_{i+1}}]$, where the indices are taken modulo $3$ and $L_z$ denotes the multiplication by $z$ in $\alb$. The action
of $D_i(x)$ works as follows:
\[
\begin{split}
D_i(x):\quad &E_i\mapsto 0,\ E_{i+1}\mapsto \frac{1}{2}\iota_i(x),\ 
                                            E_{i+2}\mapsto -\frac{1}{2}\iota_i(x),\\
                  &\iota_i(y)\mapsto 2\nup(x,y)\bigl(-E_{i+1}+E_{i+2}\bigr),\\
                  &\iota_{i+1}(y)\mapsto -\iota_{i+2}(x\bullet y),\\
                  &\iota_{i+2}(y)\mapsto \iota_{i+1}(y\bullet x),
\end{split}
\]
for any $y\in \OO$. Denote by $D_i(\OO)$ the linear span of the $D_i(x)$'s.

The Lie algebra $\Der(\alb)$ is endowed with a natural $\bigl(\ZZ/2\bigr)^2$-grading, inherited
from the grading in \eqref{eq:Z22grading}, with
\begin{equation}\label{eq:Z22grading2}
\begin{aligned}
\Der(\alb)_{(\bar 0,\bar 0)}&=D_{\tri(\OO)},&
\Der(\alb)_{(\bar 0,\bar 1)}&=D_1(\OO),\\ 
\Der(\alb)_{(\bar 1,\bar 0)}&=D_2(\OO),&
\Der(\alb)_{(\bar 1,\bar 1)}&=D_3(\OO).
\end{aligned}
\end{equation}

Our order $5$ automorphism $\sigma$ in \eqref{eq:psiauto} induces, by conjugation, an order $5$
automorphism $\Ad_\sigma$ on $\Der(\alb)$, that allows to consider $\Der(\alb)$ as an algebra
in $\Repu$, whose action is the following:
\[
\begin{split}
&\Ad_\sigma\bigl(D_{(d_1,d_2,d_3)}\bigr)
 =D_{(\sigma_1d_1\sigma_1^{-1},\sigma_2d_2\sigma_2^{-1},\sigma_3d_3\sigma_3^{-1})}\\
&\Ad_\sigma\bigl(D_i(x)\bigr)=D_i\bigl(\sigma_i(x)\bigr) ,
\end{split}
\]
for any $(d_1,d_2,d_3)\in\tri(\OO)$, $i=1,2,3$, and $x\in\OO$, with 
$(\sigma_1,\sigma_2,\sigma_3)=(\chi_u,\rho_u^+,\rho_u^-)$.

The action of $\Ad_\sigma$ on $D_{\tri(\OO)}$ is determined by the action of $\sigma_1=\chi_u$
in $\frso(\OO,\nup)$, as the projection $\pi_1$ in \eqref{eq:pitri} is an isomorphism. Recall from
Section \ref{se:order5} that $\sigma_1$ has three Jordan blocks of length one: $\FF e_0$, $\FF e_1$, and $\FF e_2$,
and a Jordan block of length $5$: $W=\bigoplus_{i=3}^7\FF e_i$. Write 
$V=\FF e_0\oplus\FF e_1\oplus \FF e_2$. With the natural linear isomorphism 
\[
\frso(\OO,\nup)=\frso(V\perp W,\nup)\simeq\frso(V,\nup)\oplus\frso(W,\nup)\oplus (V\otimes W)
\]
we check that $\sigma_1$ acts trivially, by conjugation, on $\frso(V,\nup)$, and with two (respectively
three) Jordan blocks of length $5$ on $\frso(W,\nup)$ (resp. $V\otimes W$). 

Hence the order $5$ automorphism $\Ad_\sigma$ decomposes $\Der(\alb)$ into the direct sum of:
\begin{itemize}
\item Three Jordan blocks of length $1$, forming a subalgebra isomorphic to $\frso_3$,
and five Jordan blocks of length $5$ in $D_{\tri(\OO)}$.
\item Three Jordan blocks of length $1$: $\FF D_1(e_i)$, $i=0,1,2$, and one Jordan block of length $5$
in $D_1(\OO)$.
\item Two Jordan blocks of length $4$ in $D_2(\OO)$: 
$\bigoplus_{j=0}^3\FF D_2\bigl(\sigma_2^j(e_0)\bigr)$ and\\
$\bigoplus_{j=0}^3\FF D_2\bigl(\sigma_2^j(e_2)\bigr)$.
\item Another two Jordan blocks of length $4$ in $D_3(\OO)$: 
$\bigoplus_{j=0}^3\FF D_3\bigl(\sigma_3^j(e_0)\bigr)$ and
$\bigoplus_{j=0}^3\FF D_3\bigl(\sigma_3^j(e_2)\bigr)$.
\end{itemize}

Therefore $\cD\bydef\Der(\alb)$ decomposes, under the action of $\Ad_\sigma$ as 
$\cD=\cD_1\oplus\cD_4\oplus\cD_5$, which can be refined to
\begin{equation}\label{eq:Dspliting}
\cD=D\subo\oplus D\subuno\oplus \Delta(\cD_4)\oplus \cD_5
\end{equation}
as in \eqref{eq:Csplitting}, where $\Delta=\Ad_\sigma -\id$, with
\begin{equation}\label{eq:D0D1}
\begin{split}
&D\subo=\pi_1^{-1}\bigl(\frso(\FF e_0\oplus\FF e_1\oplus\FF e_2,\nup)\bigr)\oplus 
   \FF D_1(e_0)\oplus \FF D_1(e_1)\oplus \FF D_1(e_2),\\
&D\subuno= \FF D_2(e_0)\oplus \FF D_2(e_2)\oplus \FF D_3(e_0)\oplus\FF D_3(e_2).
\end{split}
\end{equation}
The simple Lie algebra $\cD=\Der(\alb)$, which is simple of type $F_4$, \emph{semisimplifies} to the
Lie superalgebra $D=D\subo\oplus D\subuno$, with bracket given by Recipe \ref{recipe}.

We can compute explicitly the bracket on $D$, but there is no need to do it. The action by derivations
$\mu:\Der(\alb)\otimes\alb\rightarrow \alb$ is a morphism in $\Repu$, which semisimplies to the action
$\Lambda: D\otimes A\rightarrow A$ by derivations in $\sVec$ given by Recipe \ref{recipe1}. This
gives a homomorphism of Lie superalgebras 
\begin{equation}\label{eq:Lambda}
\Lambda':D\rightarrow \Der(A),\quad d\mapsto \Lambda'(d):a\mapsto \Lambda(d\otimes a).
\end{equation}

In \cite{DES} there are examples where this homomorphism $\Lambda'$ fails to be an isomorphism.
However, here everything works smoothly.

\begin{proposition}\label{pr:LambdaIso}
The homomorphism of Lie superalgebras $\Lambda'$ in \eqref{eq:Lambda} is an isomorphism.
\end{proposition}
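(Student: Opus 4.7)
The strategy is first to compute $\dim D$, then to establish injectivity of $\Lambda'$, and finally to conclude by comparing with $\dim \Der(A)$. From \eqref{eq:D0D1}, $\dim D\subo = 3 + 3 = 6$ and $\dim D\subuno = 2 + 2 = 4$, giving $\dim D = 10$; this matches $\dim\bigl(\frosp(1,2) \oplus \frosp(1,2)\bigr) = 10$, as suggested by the section title.

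For injectivity, note that $D\subo$ lies in the $\sigma$-fixed part $\cD_1$ of $\cD = \Der(\alb)$, hence preserves the generalized eigenspaces of $\sigma$; in particular it preserves $\alb_1 = A\subo$, so Recipe \ref{recipe1} reduces to $\Lambda'(d\subo)\vert_{A\subo} = d\subo\vert_{A\subo}$ for $d\subo \in D\subo$. The subalgebra $\pi_1^{-1}(\frso(V,\nup))$ acts as $\frso_3$ on its natural module $\FF\iota_1(e_0) \oplus \FF\iota_1(e_1) \oplus \FF\iota_1(e_2) \subset A\subo$, while each $D_1(e_j)$ sends $E_2 \mapsto \tfrac{1}{2}\iota_1(e_j)$ and $\pi_1^{-1}(\frso(V,\nup))$ annihilates $E_2$. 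Evaluating any putative element of $\ker\Lambda' \cap D\subo$ at $E_2$ thus forces the $D_1(e_j)$-coefficients to vanish; faithfulness of the $\frso_3$-action on $\iota_1(V)$ then kills the remaining part. For the odd component, $\Lambda'(d\subuno)(E_k) = \mathrm{proj}_{A\subuno}\bigl(d\subuno(E_k)\bigr)$, and the four images $D_2(e_0)(E_3)$, $D_2(e_2)(E_3)$, $D_3(e_0)(E_1)$, $D_3(e_2)(E_1)$ equal $\tfrac{1}{2}\iota_2(e_0), \tfrac{1}{2}\iota_2(e_2), \tfrac{1}{2}\iota_3(e_0), \tfrac{1}{2}\iota_3(e_2)$, which already lie in $A\subuno$ and are linearly independent; hence $\ker \Lambda' \cap D\subuno = 0$.

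With $\Lambda'$ injective and $\dim D = 10$, it remains to establish the matching upper bound $\dim \Der(A) \leq 10$, which is the main obstacle. I would either invoke the classical identification $\Der(K_{10}) \cong \frosp(1,2) \oplus \frosp(1,2)$ from the literature on Kac's superalgebra in characteristic $5$, or, in the self-contained spirit of this paper, compute the Lie super brackets on $D$ directly via Recipe \ref{recipe} applied to $\Der(\alb)$ and recognize $D$ itself as $\frosp(1,2) \oplus \frosp(1,2)$, thereby bypassing the external reference. The latter route is attractive since the $\bigl(\ZZ/2\bigr)^2$-grading on $\cD$ in \eqref{eq:Z22grading2} descends to $D$ and the two copies of $\frosp(1,2)$ should correspond to suitable graded pieces, after which the bracket structure is forced by the dimensions. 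Either route yields $\dim \Der(A) = 10$, and combined with injectivity, the isomorphism $\Lambda' : D \to \Der(A)$ follows.
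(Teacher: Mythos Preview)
Your approach coincides with the paper's: count dimensions, prove injectivity, and cite the known fact that $\Der(K_{10})\cong\frosp_{1,2}\oplus\frosp_{1,2}$. The paper uses the $\bigl(\ZZ/2\bigr)^2$-grading on $\Der(\alb)$ (inherited from \eqref{eq:Z22grading2}) to split the injectivity check into four graded pieces, which is slightly cleaner than your treatment of $D\subo$ as a whole, but the actual computations are the same ones you wrote down, and the paper invokes \cite[Theorem 2.8.(c)]{BE02} for the identification $\Der(K_{10})\cong\frosp_{1,2}\oplus\frosp_{1,2}$---your ``route~1''.

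Your ``route~2'', however, does not close the argument. Computing the bracket on $D$ via Recipe~\ref{recipe} and recognizing $D\cong\frosp_{1,2}\oplus\frosp_{1,2}$ tells you the structure of the source of $\Lambda'$, not of its target. You would still only know $\dim\Der(A)\geq 10$ from injectivity; nothing about the internal structure of $D$ bounds $\dim\Der(A)$ from above. So this alternative does not bypass the external reference---you would still need an independent computation or citation giving $\dim\Der(K_{10})=10$.
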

\begin{proof}
The Lie superalgebra $\Der(A)$ is, up to isomorphism, a direct sum of two copies of the
orthosymplectic Lie superalgebra $\frosp_{1,2}$ (see \cite[Theorem 2.8.(c)]{BE02}). In particular, it has dimension $10$, which is too
the dimension of $D=D\subo\oplus D\subuno$. Hence it is enough to check that $\Lambda'$ is one-to-one.

Notice that $\sigma$ preserves the $\bigl(\ZZ/2\bigr)^2$-grading of $\alb$ in \eqref{eq:Z22grading}, and
hence $\Ad_\sigma$ preserves the induced grading on $\Der(\alb)$ in \eqref{eq:Z22grading2}. The
morphism $\mu:\Der(\alb)\otimes\alb\rightarrow \alb$ in $\Repu$ is homogeneous of degree 
$(\bar 0,\bar 0)$ and hence so is $\Lambda'$, where the $\bigl(\ZZ/2\bigr)^2$-grading on $D$, induced
from the grading in $\Der(\alb)$ is given as follows:
\[
\begin{aligned}
D_{(\bar 0,\bar 0)}&=\pi_1^{-1}\bigl((\frso(\FF e_0\oplus\FF e_1\oplus\FF e_2,\nup)\bigr), &
D_{(\bar 0,\bar 1)}&=\FF D_1(e_0)\oplus \FF D_1(e_1)\oplus \FF D_1(e_2),\\
D_{(\bar 1,\bar 0)}&=\FF D_2(e_0)\oplus \FF D_2(e_2),&
D_{(\bar 1,\bar 1)}&=\FF D_3(e_0)\oplus \FF D_3(e_2).
\end{aligned}
\]
Recipe \ref{recipe1} gives the following:
\begin{itemize}
\item 
$\Lambda\bigl(\pi_1^{-1}(d)\otimes \iota_1(v)\bigr)=\iota_1\bigl(d(v)\bigr)$, for any 
$d\in\frso(\FF e_0\oplus\FF e_1\oplus\FF e_2,\nup)$ and $v\in \FF e_0\oplus\FF e_1\oplus \FF e_2$, and this shows that 
$\Lambda'$ is one-to-one on $D_{(\bar 0,\bar 0)}$,
\item 
$\Lambda\bigl(D_1(e_i)\otimes E_2\bigr)=\frac{1}{2}\iota_1(e_i)$, for any $i=0,1,2$, and thus 
$\Lambda'$ is one-to-one on $D_{(\bar 0,\bar 1)}$,
\item
$\Lambda\bigl(D_2(e_i)\otimes E_3\bigr)=\frac{1}{2}\iota_2(e_i)$, for $i=0,2$, so that 
$\Lambda'$ is one-to-one on $D_{(\bar 1,\bar 0)}$,
\item
$\Lambda\bigl(D_3(e_i)\otimes E_1\bigr)=\frac{1}{2}\iota_3(e_i)$, for $i=0,2$, so that 
$\Lambda'$ is one-to-one on $D_{(\bar 1,\bar 0)}$.
\end{itemize}
We conclude that $\Lambda'$ is an isomorphism.
\end{proof}

Proposition \ref{pr:LambdaIso} tells us that the semisimplification process converts the simple
Lie algebra $\Der(\alb)$ of type $F_4$ into the semisimple Lie superalgebra 
$\frosp_{1,2}\oplus\frosp_{1,2}$.

\bigskip

\section{From \texorpdfstring{$E_8$}{E8} to 
\texorpdfstring{$\frel(5;5)$}{el(5;5)}}\label{se:E8el55}

The simple Lie superalgebra $\frel(5;5)$ is specific of characteristic $5$. It appeared in 
\cite{Eld07}. Its even part is the orthogonal Lie algebra $\frso_{11}$ and its odd part is the
spin module for the even part. It was remarked in \cite{Eld07}, and this was the reason of its
discovery, that it can be obtained from the well-known Tits construction in \cite{Tits66} (see
also \cite{Eld11} and the references therein).

The simple Lie algebra of type $E_8$ appears in this construction as
\[
\cT(\OO,\alb)\bydef \Der(\OO)\oplus \bigl(\OO^0\otimes \alb^0\bigr)\oplus \Der(\alb),
\]
with $\OO^0\bydef\{x\in\OO\mid \nup(1,x)=0\}$, $\alb^0\bydef\{x\in\alb\mid \trace(x)=0\}$,
where $\trace$ denotes the usual trace in $\alb=H_3(\OO)$. The bracket in $\cT(\OO,\alb)$ is
determined as follows:
\begin{itemize}
\item The Lie algebras $\Der(\OO)$ and $\Der(\alb)$ are subalgebras of $\cT(\OO,\alb)$, and we have
$[\Der(\OO),\Der(\alb)]=0$,
\item
$[D,a\otimes x]=D(a)\otimes x$, $[d,a\otimes x]=a\otimes d(x)$, for any $D\in\Der(\OO)$,
$d\in\Der(\alb)$, and $a\in\OO^0$, $x\in\alb^0$,
\item
$[a\otimes x,b\otimes y]=\frac{1}{3}\trace(x\cdot y)D_{a,b}+
[a,b]\otimes (x\cdot y-\frac{1}{3}\trace(x\cdot y)1)-2\nup(a,b)d_{x,y}$, for $a,b\in\OO^0$ and 
$x,y\in \alb^0$, where $D_{a,b}$ is the derivation of $\OO$ given by 
$D_{a,b}(c)=[[a,b],c]+3((ac)b-a(cb))$, while $d_{x,y}$ is the derivation of $\alb$ given by
$d_{x,y}(z)=x\cdot (y\cdot z)-y\cdot (x\cdot z)$.
\end{itemize}

Our order $5$ automorphism $\sigma$ on $\alb$ preserves $\alb^0=\FF(E_1-E_2)\oplus\FF(E_2-E_3)\oplus \iota_1(\OO)\oplus\iota_2(\OO)\oplus\iota_3(\OO)$, and hence it induces an order $5$
automorphism of $\cT(\OO,\alb)$ through its action on $\alb^0$ and, by conjugation, on 
$\Der(\alb)$. This allows to view $\cT(\OO,\alb)$ as a Lie algebra in $\Repu$. Collecting previous
information, the decomposition of $\cT(\OO,\alb)$ into indecomposable modules for the
action of $\sigma$ is $\cT(\OO,\alb)=(14+7\times 5+6)L_1+(7\times 4+4)L_4+(7\times 1+6)L_5
=55 L_1+ 32 L_4 + 13 L_5$.

The Lie superalgebra obtained by semisimplification is defined, after the natural identification of
$D=D\subo\oplus D\subuno$ with $\Der(A)$ by means of the isomorphism $\Lambda'$ in 
Proposition \ref{pr:LambdaIso}, on the vector space
\[
\Der(\OO)\oplus \bigl(\OO^0\otimes A^0\bigr)\oplus \Der(A)
\]
where $A=A\subo\oplus A\subuno$ is the Jordan superalgebra, isomorphic to $K_{10}$, obtained 
from $\alb$ (Theorem \ref{th:K10}). The linear form $t_{\alb}=\frac{1}{3}\trace$ on $\alb$
in Remark \ref{re:bizarre} induces a 
normalized trace form $t$ on $A$, that is, $t(1)=1$, $t(A\subuno)=0$, and 
$t\bigl((xy)z\bigr)=t\bigl(x(yz)\bigr)$, which is necessarily unique, and $A^0$ is the subspace
$\{x\in A\mid t(x)=0\}$. The Lie superalgebra thus obtained, with the bracket given by Recipe 
\ref{recipe}, is then the Lie superalgebra $\cT(\OO,A)$ obtained by the super version of Tits
construction (see \cite{Eld11} and the references therein), and this is, up to isomorphism, the
exceptional Lie superalgebra $\frel(5;5)$.

\smallskip

\begin{remark}\label{re:F(4)}
The simple Lie algebra of type $E_7$ appears in Tits construction as $\cT(\HH,\alb)$, where $\HH$ is
the quaternion algebra over $\FF$, i.e., the associative algebra of $2\times 2$-matrices. After 
semisimplification we get $\cT(\HH,K_{10})$, which is the simple Lie superalgebra $F(4)$. In the same vein, the Lie algebra $\cT(\FF\oplus\FF,\alb)$ of type $E_6$ 
semisimplifies to $\cT(\FF\oplus\FF,K_{10})$, which is the orthosymplectic Lie superalgebra
$\frosp_{2,4}$, and the Lie algebra $\cT(\FF,\alb)$ of type $F_4$ to 
$\cT(\FF,K_{10})\cong\Der(K_{10})\cong\frosp_{1,2}\oplus\frosp_{1,2}$  (see 
\cite[Table 2]{Eld11}).
\end{remark}


\bigskip

\bigskip\bigskip

\appendix

\section{}\label{se:App1}

In this appendix, a canonical version of Recipe \ref{recipe} is obtained. In a similar vein, one
gets a canonical version of Recipe \ref{recipe1}.

Let $(\cA,\mu)$ be an algebra in $\Repp$, with $\mu(x\otimes y)=xy$ for all $x,y$. Fix a decomposition
$\cA=A\subo\oplus \cA_2\oplus\cdots\cA_{p-2}\oplus A\subuno\oplus \delta(\cA_{p-1})\oplus\cA_p$ 
as in \eqref{eq:Csplitting}, and recall the  multiplication $m$ in $A=A\subo\oplus A\subuno$ by means of Recipe \ref{recipe}: $m(x\otimes y)=x\diamond y$. As usual, write $\delta=\sigma-1$, which satisfies 
$\delta(xy)=\delta(x)y+x\delta(y)+\delta(x)\delta(y)$ for all $x,y\in\cA$.

Following \cite[Section 3.1]{EtOs20}, consider the vector spaces
\[
\tilde A\subo\bydef \ker\delta/(\ker\delta\cap\im\delta),\quad
\tilde A\subuno\bydef (\ker\delta\cap\im\delta^{p-2})/\im\delta^{p-1},
\]
and their direct sum $\tilde A=\tilde A\subo\oplus\tilde A\subuno$. For any $x\in\ker\delta$, denote by
$[x]\subo$ the class of $x$ modulo $\ker\delta\cap\im\delta$, and for any 
$x\in\ker\delta\cap\im\delta^{p-2}$, denote by $[x]\subuno$ the class of $x$ modulo $\im\delta^{p-1}$.

The linear map $\phi:A=A\subo\oplus A\subuno\rightarrow \tilde A$ given by
\begin{equation}\label{eq:phi}
\phi(x)=[x]\subo\quad \forall x\in A\subo,\qquad \phi(x)=[\delta^{p-2}(x)]\subuno\quad 
\forall x\in A\subuno,
\end{equation}
is a linear isomorphism.

\begin{theorem}
Define a multiplication on $\tilde A$ as follows:
\[
\begin{split}
[x]\subo\star [y]\subo&= [xy]\subo,\ \forall x,y\in\ker\delta,\\
[x]\subo\star [y]\subuno&=[xy]\subuno,\ \forall x\in\ker\delta,\ \forall y\in\ker\delta\cap\im\delta^{p-2},\\
[x]\subuno\star [y]\subo&=[xy]\subuno,\ \forall x\in\ker\delta\cap\im\delta^{p-2},\ \forall y\in\ker\delta,\\
[x]\subuno\star [y]\subuno&=\sum_{i=0}^{p-2}(-1)^i\delta^{p-2-i}(a)\delta^i(b)
    +\sum_{\substack{0\leq i,j\leq p,\\ i+j\geq p-1}}\mu_{ij}\delta^i(a)\delta^j(b) \\
    &\qquad\qquad\qquad\qquad 
   \forall x=\delta^{p-2}(a), y=\delta^{p-2}(b) \in \ker\delta\cap\im\delta^{p-2}.
\end{split}
\]
Then, this multiplication $\star$ is well defined, and the linear isomorphism $\phi$ in \eqref{eq:phi} is an isomorphism of
algebras: $\phi(x\diamond y)=\phi(x)\star \phi(y)$ for all $x,y\in A\subo\cup A\subuno$.
\end{theorem}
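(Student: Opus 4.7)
The plan is to verify the theorem in three stages: confirm that $\phi$ is a linear isomorphism by comparing the intrinsic subspaces with the fixed decomposition, then check well-definedness of $\star$, and finally verify the homomorphism property case-by-case, with the heavy lifting for the odd-odd case already done inside the proof of Theorem \ref{th:main}. The ubiquitous tool is the twisted Leibniz identity $\delta(xy)=\delta(x)y+x\delta(y)+\delta(x)\delta(y)$, immediate from $\sigma$ being an automorphism.

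First I would analyze $\phi$. Since each Jordan block $L_i$ contributes a one-dimensional socle $\delta^{i-1}(\cA_i)$, we have $\ker\delta=A\subo\oplus\bigoplus_{i=2}^{p}\delta^{i-1}(\cA_i)$ and $\ker\delta\cap\im\delta=\bigoplus_{i=2}^{p}\delta^{i-1}(\cA_i)$, so projection induces an isomorphism $A\subo\cong\tilde A\subo$. Similarly $\ker\delta\cap\im\delta^{p-2}=\delta^{p-2}(\cA_{p-1})\oplus\delta^{p-1}(\cA_p)$ while $\im\delta^{p-1}=\delta^{p-1}(\cA_p)$, giving $\tilde A\subuno\cong\delta^{p-2}(\cA_{p-1})$. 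The restriction of $\delta^{p-2}$ to $\cA_{p-1}$ has kernel $\delta(\cA_{p-1})$, whose complement is $A\subuno$, so $x\mapsto[\delta^{p-2}(x)]\subuno$ is bijective on $A\subuno$.

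For well-definedness of $\star$, the inclusions $\ker\delta\cdot\ker\delta\subseteq\ker\delta$ and $(\ker\delta\cap\im\delta)\cdot\ker\delta\subseteq\ker\delta\cap\im\delta$ are immediate from Leibniz: writing $x=\delta(z)$ with $z\in\ker\delta^{2}$ and $y\in\ker\delta$, one has $\delta(zy)=\delta(z)y=xy\in\im\delta$. The odd-odd formula is subtler: for $a,b\in\cA_{p-1}$ the element
\[
\omega_{a,b}\bydef\sum_{i=0}^{p-2}(-1)^{i}\delta^{p-2-i}(a)\otimes\delta^{i}(b)+\sum_{i+j\geq p-1}\mu_{ij}\delta^{i}(a)\otimes\delta^{j}(b)
\]
lies in the submodule $\langle a,\delta(a),\ldots,\delta^{p-2}(a)\rangle\otimes\langle b,\delta(b),\ldots,\delta^{p-2}(b)\rangle\cong L_{p-1}\otimes L_{p-1}$ and is $\sigma$-invariant by Lemma \ref{le:Lp-1Lp-1L1}, hence $\mu(\omega_{a,b})\in\ker\delta$. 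Changing the lift of $x=\delta^{p-2}(a)$ by an element whose $\delta^{p-2}$-image lies in $\im\delta^{p-1}$ modifies $\omega_{a,b}$ only by terms in $\im\delta$ of the tensor product, and $\mu$ commutes with $\delta$, so $\mu(\omega_{a,b})$ changes only by elements of $\ker\delta\cap\im\delta$.

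For the homomorphism property, the even-even case reduces to observing that $xy-x\diamond y$ lies in $\ker\delta$ (by Leibniz, since $\delta(x)=\delta(y)=0$) and in the complement of $A\subo$ in the decomposition \eqref{eq:Csplitting}, which the socle analysis above identifies with $\ker\delta\cap\im\delta$. The mixed cases exploit $\delta^{p-2}(xy)=x\delta^{p-2}(y)$ (valid when $\delta(x)=0$), so that the discrepancy between $[\delta^{p-2}(x\diamond y)]\subuno$ and $[x\,\delta^{p-2}(y)]\subuno$ comes only from the $\cA_p$-component of $xy-x\diamond y$, which is forced into $\delta^{p-1}(\cA_p)=\im\delta^{p-1}$ by the same socle consideration together with the fact that the discrepancy lies in $\ker\delta$. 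The odd-odd case is the heart: translating the defining formula for $[x]\subuno\star[y]\subuno$ via $\mu'$ from \eqref{eq:mu'}, the second sum vanishes modulo $\im\delta$ by Lemma \ref{le:Lp-1Lp-1L1}(1) combined with $\mu(\im\delta)\subseteq\im\delta$, while equation \eqref{eq:wx1y1} exactly matches the $\cA_1$-projection of $\sum_{i}(-1)^{i}\delta^{p-2-i}(x)\delta^{i}(y)$ with $\mathrm{proj}_{A\subo}(x\,\delta^{p-2}(y))=x\diamond y$. I expect the main obstacle to be the bookkeeping in the odd-odd well-definedness: one must verify that the non-canonical scalars $\mu_{ij}$ ultimately drop out of the class in $\tilde A\subo$, which reduces to the key fact that the skew-symmetric $\sigma$-invariant $w$ of Lemma \ref{le:Lp-1Lp-1L1} is uniquely determined modulo $\delta$ of the tensor square of $L_{p-1}$.
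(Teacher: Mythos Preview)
Your proposal is correct and follows essentially the same approach as the paper's proof: case-by-case verification of well-definedness using the twisted Leibniz rule together with the structural facts from Lemma~\ref{le:Lp-1Lp-1L1}, then case-by-case verification of the homomorphism property, invoking \eqref{eq:wx1y1} for the odd-odd case. The only minor gap is that you do not explicitly treat the well-definedness of $[x]\subo\star[y]\subuno$ (the paper checks that $xy\in\im\delta^{p-1}$ when $x\in\ker\delta\cap\im\delta$, $y\in\ker\delta\cap\im\delta^{p-2}$), and your phrase ``for $a,b\in\cA_{p-1}$'' in the odd-odd well-definedness should be relaxed to arbitrary lifts $a,b\in\ker\delta^{p-1}$; but the required arguments are straightforward extensions of what you already wrote.
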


\begin{proof}
For any $x,y\in\ker\delta$, $xy\in\ker\delta$ holds, and hence we get the bilinear map
\[
\begin{split}
\ker\delta\times\ker\delta&\longrightarrow \tilde A\subo=\ker\delta/(\ker\delta\cap\im\delta)\\
(x,y)\ &\mapsto\  [xy]\subo.
\end{split}
\]
For $x\in\ker\delta\cap\im\delta$ and $y\in\ker\delta$, we have $x=\delta(u)$ for some $u$, and 
$xy=\delta(u)y=\delta(uy)\in\im\delta$, so $[xy]\subo=0$ holds. In the same vein, for $x\in\ker\delta$ and 
$y\in\ker\delta\cap\im\delta$, we get $xy\in\im\delta$. Hence, the bilinear map above gives the bilinear 
map $\tilde A\subo\times\tilde A\subo\rightarrow \tilde A\subo$ given by $\left([x]\subo,[y]\subo\right)
\mapsto [xy]\subo$.

For $x$ in $\ker\delta$ and $y\in\ker\delta\cap\im\delta^{p-2}$, we have $y=\delta^{p-2}(u)$ for some
$u$, and $xy=x\delta^{p-2}(u)=\delta^{p-2}(xu)$ lies in $\ker\delta\cap\im\delta^{p-2}$. Hence,
we get the bilinear map
\[
\begin{split}
\ker\delta\times(\ker\delta\cap\im\delta^{p-2})&\longrightarrow 
   \tilde A\subuno=(\ker\delta\cap\im\delta^{p-2})/\im\delta^{p-1}\\
(x,y)\ &\mapsto\  [xy]\subuno.
\end{split}
\]
For $x\in\ker\delta\cap\im\delta$ and $y\in\ker\delta\cap\im\delta^{p-2}$, $x=\delta(u)$ for some $u$
and $y=\delta^{p-2}(v)$ for some $v$. Hence, $xy=\delta(u)\delta^{p-2}(v)=\delta^{p-1}(uv) \in
\im\delta^{p-1}$ as $\delta^i(u)=0$ for $i\geq 2$ and $\delta^{p-1}(v)=0$. Thus, $[xy]\subuno=0$ holds. Also, for $x\in\ker\delta$ and $y\in\im\delta^{p-1}$, we have $y=\delta^{p-1}(v)$ and 
$xy=x\delta^{p-1}(v)=\delta^{p-1}(xv)\in\im\delta^{p-1}$. As a consequence, the above bilinear map
induces the bilinear map $\tilde A\subo\times \tilde A\subuno\rightarrow \tilde A\subuno$ given by
$\left([x]\subo,[y]\subuno\right)\mapsto [xy]\subuno$.

Similarly we obtain the bilinear map $\tilde A\subuno\times \tilde A\subo\rightarrow \tilde A\subuno$ 
given by $\left([x]\subuno,[y]\subo\right)\mapsto [xy]\subuno$.

Finally, for $x,y\in\ker\delta\cap\im\delta^{p-2}$, there are elements $a,b\in \cA$ such that
$x=\delta^{p-2}(a)$, $y=\delta^{p-2}(b)$, consider the element
\[
w(a,b)=\sum_{i=0}^{p-2}(-1)^i\delta^{p-2-i}(a)\delta^i(b)
    +\sum_{\substack{0\leq i,j\leq p,\\ i+j\geq p-1}}\mu_{ij}\delta^i(a)\delta^j(b).
\]
Note that $w(a,b)$ lies in $\ker\delta$, because it is the image of the element $w$ in \eqref{eq:w} under
the $\mathsf{C}_p$-invariant linear map $L_{p-1}\otimes L_{p-1}\rightarrow \cA$, $v_i\otimes v_j\mapsto \delta^i(a)\delta^j(b)$.

If $\delta^{p-2}(a)=\delta^{p-2}(a')$, and $a-a'=u$, we have $\delta^{p-2}(u)=0$ and
$w(a,b)-w(a',b)=\sum_{i=0}^{p-2}(-1)^i\delta^{p-2-i}(u)\delta^i(b)
    +\sum_{\substack{0\leq i,j\leq p,\\ i+j\geq p-1}}\mu_{ij}\delta^i(u)\delta^j(b)$. Then, with the 
arguments in the proof of Lemma \ref{le:Lp-1Lp-1L1} leading to \eqref{eq:Ad}, we get $\delta^i(u)\delta^j(b)\in\im\delta$ for
$i+j\geq p-2$. Therefore, the class of $w(a,b)$ modulo $\im\delta$ does not depend on the chosen element $a$ with $x=\delta^{p-2}(a)$, and the same for $b$. Hence, the bilinear map
\[
\begin{split}
(\ker\delta\cap\im\delta^{p-2})\times(\ker\delta\cap\im\delta^{p-2})&\longrightarrow 
   \tilde A\subo=\ker\delta/(\ker\delta\cap\im\delta)\\
\bigl(x=\delta^{p-2}(a),y=\delta^{p-2}(b)\bigr)&\mapsto [w(a,b)]\subo
\end{split}
\]
is well defined.  If $x$ lies in $\im\delta^{p-1}$, so $x=\delta^{p-2}(a)$ and $a=\delta(u)$ for some $u$, then the arguments in the proof  of Lemma \ref{le:Lp-1Lp-1L1} give $\delta^i(u)\delta^j(b)\in\im\delta$ for $i+j\geq p$, and this gives $w(a,b)\in\im\delta$. As a consequence, the above bilinear map induces the bilinear map 
$\tilde A\subuno\times \tilde A\subuno\rightarrow \tilde A\subo$
given by
\[
\left([x=\delta^{p-2}(a)]\subuno,[y=\delta^{p-2}(b)]\subuno\right)\, \mapsto\, [w(a,b)]\subo.
\]

We conclude that the multiplication $\star$ is well defined. Let us check that the linear bijection $\phi$ in \eqref{eq:phi}
is an algebra isomorphism $(A,\diamond)\rightarrow (\tilde A,\star)$.

For $x,y\in A\subo$, $\phi(x)\star \phi(y)=[x]\subo\star [y]\subo=[xy]\subo=[\mathrm{proj}_{A\subo}(xy)]\subo
=\phi(x\diamond y)$, because $xy-\mathrm{proj}_{A\subo}(xy)$ lies in $\ker\delta\cap\im\delta$.

For $x\in A\subo$ and $y\in A\subuno$, write $xy=z_1+\cdots z_p$ with $z_i\in\cA_i$ for all $i$. As $x$ lies in $\ker\delta$, $\delta^{p-1}(xy)=x\delta^{p-1}(y)=0$, so $\delta^{p-1}(z_p)=0$ and $z_p\in\delta(\cA_p)$. Hence, we get $\delta^{p-2}(xy)=\delta^{p-2}(z_{p-1})+\delta^{p-2}(z_p)=
\delta^{p-2}\bigl(\mathrm{proj}_{A\subuno}(xy)\bigr)+\delta^{p-2}(z_p)\in 
\delta^{p-2}\bigl(\mathrm{proj}_{A\subuno}(xy)\bigr)+\im\delta^{p-1}$, and we have
\begin{multline*}
\phi(x)\star\phi(y)=[x]\subo\star[\delta^{p-2}(y)]\subuno=[x\delta^{p-2}(y)]\subuno
\\
=[\delta^{p-2}(xy)]\subuno=
\bigl[\delta^{p-2}\bigl(\mathrm{proj}_{A\subuno}(xy)\bigr)\bigr]\subuno
=\phi(x\diamond y).
\end{multline*}
In the same vein, $\phi(x)\star\phi(y)=\phi(x\diamond y)$ for $x\in A\subuno$ and $y\in A\subo$.

And finally, for $x,y\in A\subuno$, we get
\[
\begin{split}
\phi(x)\star\phi(y)
    &=[\delta^{p-2}(x)]\subuno\star [\delta^{p-2}(y)]\subuno\\
    &=[w(x,y)]\subo =\bigl[\mathrm{proj}_{A\subo}\bigl(w(x,y)\bigr)\bigr]\subo\\
    &=\bigl[\mathrm{proj}_{A\subo}\bigl(x\delta^{p-2}(y)\bigr)\bigr]\subo\quad 
         \text{(using the arguments in \eqref{eq:wx1y1})}\\
    &=\phi(x\diamond y),
\end{split}
\]
as desired.
\end{proof}

Both the vector superspace $\tilde A$ and the multiplication $\star$ are defined in a way independent of
noncanonical decompositions.

\bigskip

\section{}\label{se:App2}

Here is the SageMath code used to obtain the Jordan blocks of $\rho^-_u$ in \eqref{eq:rho}, and later  on
the expression for $\delta^3(\iota_3(e_2))$ in \eqref{eq:iota3e2}:

\medskip

{\obeylines
\def\salto{\noindent\phantom{re7=matrix(In}}
\noindent\texttt{%
\# Matrices of $r_{e_7}\,$, $l_{e_6}\,$, $r_{e_6}\,$, $l_{e_5}\,$, $r_{e_5}\,$, $l_{e_4}\,$, $r_{e_4}\,$, and $l_{e_3}\,$:

re7=matrix(Integers(5),[[0,0,0,0,0,0,0,-1],[0,0,0,-1,0,0,0,0],
\salto      [0,0,0,0,0,0,-1,0],[0,1,0,0,0,0,0,0],
\salto      [0,0,0,0,0,-1,0,0],[0,0,0,0,1,0,0,0],
\salto      [0,0,1,0,0,0,0,0],[-1,0,0,0,0,0,0,0]]).transpose();
le6=matrix(Integers(5),[[0,0,0,0,0,0,-1,0],[0,0,0,0,0,1,0,0],
\salto            [0,0,0,0,0,0,0,-1],[0,0,0,0,1,0,0,0],
\salto            [0,0,0,-1,0,0,0,0],[0,-1,0,0,0,0,0,0],
\salto            [-1,0,0,0,0,0,0,0],[0,0,1,0,0,0,0,0]]).transpose();
re6=matrix(Integers(5),[[0,0,0,0,0,0,-1,0],[0,0,0,0,0,-1,0,0],
\salto            [0,0,0,0,0,0,0,1],[0,0,0,0,-1,0,0,0],
\salto            [0,0,0,1,0,0,0,0],[0,1,0,0,0,0,0,0],
\salto            [-1,0,0,0,0,0,0,0],[0,0,-1,0,0,0,0,0]]).transpose();
le5=matrix(Integers(5),[[0,0,0,0,0,-1,0,0],[0,0,0,0,0,0,-1,0],
 \salto           [0,0,0,1,0,0,0,0],[0,0,-1,0,0,0,0,0],
\salto            [0,0,0,0,0,0,0,-1],[-1,0,0,0,0,0,0,0],
\salto            [0,1,0,0,0,0,0,0],[0,0,0,0,1,0,0,0]]).transpose();
re5=matrix(Integers(5),[[0,0,0,0,0,-1,0,0],[0,0,0,0,0,0,1,0],
\salto            [0,0,0,-1,0,0,0,0],[0,0,1,0,0,0,0,0],
\salto            [0,0,0,0,0,0,0,1],[-1,0,0,0,0,0,0,0],
\salto            [0,-1,0,0,0,0,0,0],[0,0,0,0,-1,0,0,0]]).transpose();
le4=matrix(Integers(5),[[0,0,0,0,-1,0,0,0],[0,0,1,0,0,0,0,0],
\salto            [0,-1,0,0,0,0,0,0],[0,0,0,0,0,0,-1,0],
\salto            [-1,0,0,0,0,0,0,0],[0,0,0,0,0,0,0,1],
\salto            [0,0,0,1,0,0,0,0],[0,0,0,0,0,-1,0,0]]).transpose();
re4=matrix(Integers(5),[[0,0,0,0,-1,0,0,0],[0,0,-1,0,0,0,0,0],
\salto            [0,1,0,0,0,0,0,0],[0,0,0,0,0,0,1,0],
\salto            [-1,0,0,0,0,0,0,0],[0,0,0,0,0,0,0,-1],
\salto            [0,0,0,-1,0,0,0,0],[0,0,0,0,0,1,0,0]]).transpose();
le3=matrix(Integers(5),[[0,0,0,-1,0,0,0,0],[0,0,0,0,0,0,0,-1],
\salto            [0,0,0,0,0,-1,0,0],[-1,0,0,0,0,0,0,0],
\salto            [0,0,0,0,0,0,1,0],[0,0,1,0,0,0,0,0],
\salto            [0,0,0,0,-1,0,0,0],[0,1,0,0,0,0,0,0]]).transpose();

\null
A=(le3-le4)*(re4-re5)*(le5-le6)*(re6-re7);

\null
J=A.jordan\_form(); J

\null
\salto    [1 1 0 0|0 0 0 0]
\salto    [0 1 1 0|0 0 0 0]
\salto    [0 0 1 1|0 0 0 0]
\salto    [0 0 0 1|0 0 0 0]
\salto    [-------+-------]
\salto    [0 0 0 0|1 1 0 0]
\salto    [0 0 0 0|0 1 1 0]
\salto    [0 0 0 0|0 0 1 1]
\salto    [0 0 0 0|0 0 0 1]

\noindent\# Thus, A has exactly two Jordan blocks of length 4

\null
T = J.is\_similar(A, transformation=True); T

\salto    (
\salto\ \ \           [0 2 1 1 3 3 0 0]
\salto\ \ \           [0 0 0 0 1 1 3 0]
\salto\ \ \           [2 2 0 0 0 2 1 1]
\salto\ \ \           [3 0 0 0 2 4 2 0]
\salto\ \ \           [1 1 3 0 0 0 0 0]
\salto\ \ \           [3 1 3 0 3 0 0 0]
\salto\ \ \           [4 2 0 0 1 4 3 0]
\salto       True,\ [4 1 2 0 4 2 0 0]
\salto    )

\noindent\# Therefore, the 'heads' of these two Jordan blocks are $e_0$ and $e_2$ 
(fourth and eighth columns)

\null
\noindent\# Let us compute now $\delta^3(\iota_3(e_2))$:

(A-identity\_matrix(8))\^{}3*vector([0,0,1,0,0,0,0,0])

\null
\salto   (3, 1, 0, 2, 0, 3, 1, 4)

\null
\noindent\# Hence, $\delta^3(\iota_3(e_2))=\iota_3(-2e_0+e_1+2e_3-2e_5+e_6-e_7)$.
}}

\bigskip\bigskip\null

\end{document}